\theoremstyle{plain}
\newtheorem{thm}{Theorem}
\newtheorem{lm}[thm]{Lemma}
\newtheorem{rem}[thm]{Remark}
\newtheorem*{dfn*}{Definition}
\newcommand{\Z}{\mathbf{Z}}
\begin{document}
\title{Two-dimensional self-interlocking structures in three-space}

\author[1]{V.O. Manturov\footnote{
The work of V.O.Manturov was funded by the development program of the Regional Scientific and Educational Mathematical Center of the Volga Federal District, agreement N 075-02-2020.
}}
\author[2]{A.Ya. Kanel-Belov\footnote{Alexei Kanel-Belov was supported by Russian Science Foundation grant No. 17-11-01377.}}
\author[3]{S. Kim}
\affil[1,2]{Moscow institute of physics and technology (Russia)}
\affil[1]{Kazan Federal University (Russia)}
\affil[1]{Northeastern University (China)}
\affil[2]{Bar-Ilan University (Israel)}
\affil[3]{Jilin university (China)}

\date{}

\maketitle

\section{Introduction. Statement of the problem}

It is well known that if there is a finite set of convex bodies on the plane whose interiors do not overlap, then there is at least one {\em extreme} one among these bodies --- one that can be continuously moved ``to infinity'' (outside the large ball containing other bodies), leaving all other bodies motionless. Moreover, if all these bodies are balls, then in a space of any dimension one can find a body that is carried away ``to infinity'' (see \cite {Turgor}).

It was observed that in a space of dimension three and higher, the phenomenon of {\em self-interlocking structures} takes place. {\em Self-interlocking structure} is a collection of convex bodies with non-overlapping interiors such that if one fix everything except any one, the rest cannot be ``carried away to infinity.'' This property is equivalent to the following: {\it any infinitely small motion is possible only as part of the joint motion of all bodies together (as a single rigid body).} For the history of the discovery of self-interlocking structures, as well as their examples and applications, see \cite{DEBP1, DEBP2, DEPKB}.

This topic becomes popular both in pure mathematics and in applications to both architecture and natural sciences (see \cite{Belov_q}). A number of articles are devoted to it (for example, \cite{DEBP1, DEBP2, DEPKB, ActaAstronautica, Suttle, PhysReview}), both in popular (\cite{Belov_q}) and in top-rated journals, including ``Nature'' \cite{Nature}. There are a number of patents. A similar idea is already used when creating body armor \cite{bodyarmor}. As a result of the megagrant won by Yu. Estrin arose (and successfully exists) the laboratory \cite{lab-Estrin}.

Various research groups have sprung up at different times, for example, Thomas Siegmund's group Thomas Siegmund \cite{team-ThomasSiegmund} (see publications of this group, for example, \cite{KhandelwalS.SiegmundT.CipraR.J.andBoltonJ.S., KhandelwalS.SiegmundT.CipraR.J.andBoltonJ.S.1, FengY.SiegmundT.HabtourE.RiddickJ., FengY.SiegmundT.HabtourE.RiddickJ.1, MatherACipraR.JSiegmundT.}.

We will provide links to some other groups and individual researchers around the world Francois Barthelat \cite{FrancoisBarthelat}, Yves Brechet \cite{YvesBrechet}, Andrey Molotnikov \cite{AndreyMolotnikov},
Giuseppe Fallacara \cite{GiuseppeFallacara}, Vera Viana \cite{VeraViana}, and also \cite{dartmouth}.
We also indicate some of the work of the group and the Netherlands \cite{heronjournal} and in Technione \cite{grobman1, grobman2, sagepub}

The available structures are based on the consideration of layers of cubes, tetrahedra and octahedra and their variations (see Fig. \ref{stacking_cubes}, \ref{stacking_tetrahedra}, \ref{stacking_octa}).

\begin{figure}[!htb]
 \begin{minipage}{.5\textwidth}
 \centering\includegraphics[width=\linewidth]{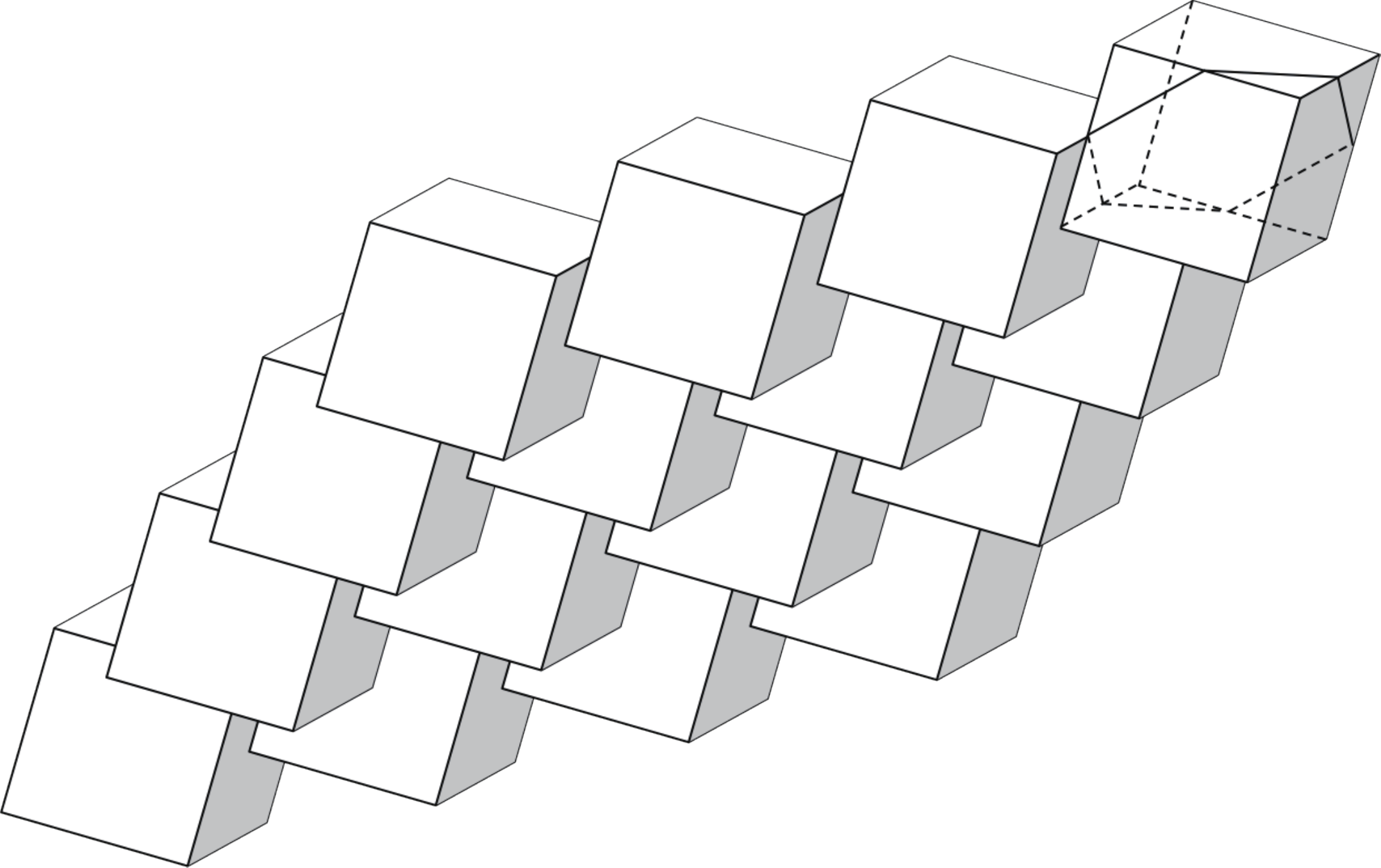}
\caption{Stacking of cubes}
\label{stacking_cubes}
  \end{minipage}\hfill
   \begin{minipage}{.5\textwidth}
 \centering\includegraphics[width=\linewidth]{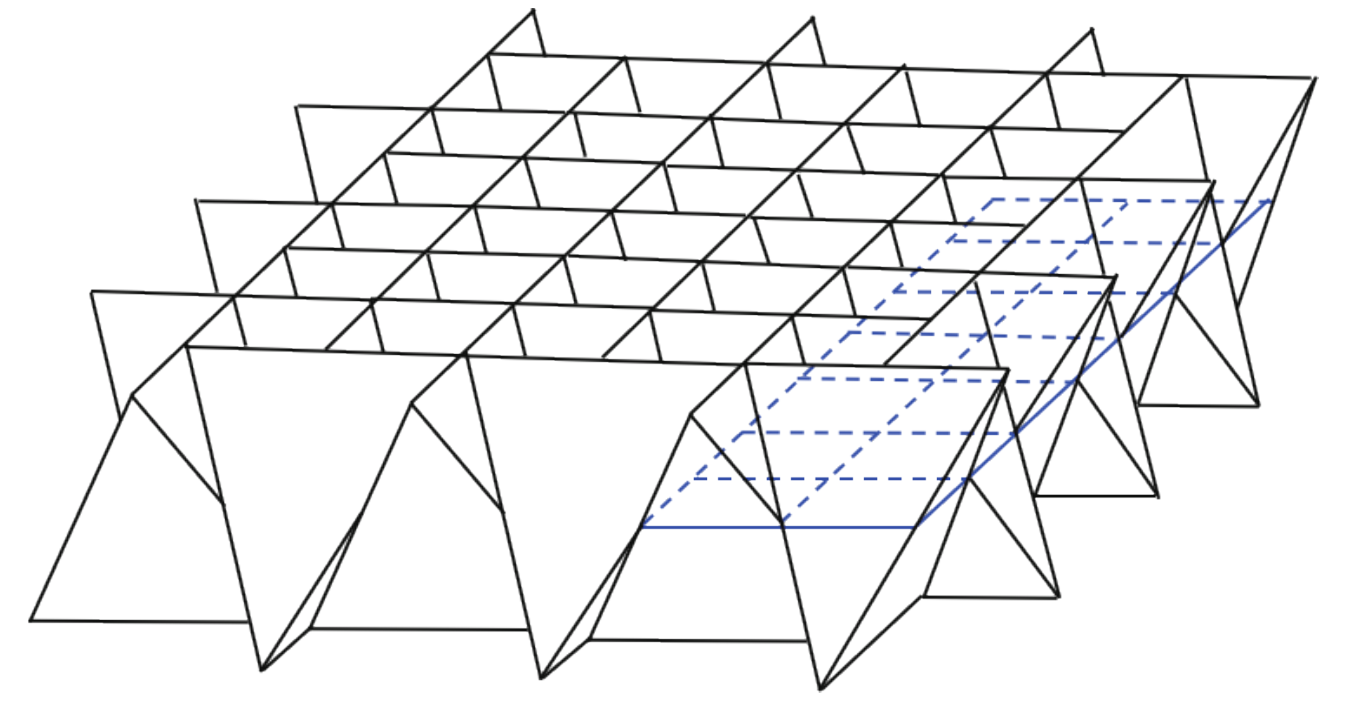}
\caption{Stacking of tetrahedra}
\label{stacking_tetrahedra} 
  \end{minipage}
  \vspace{5mm}

  \centering\includegraphics[width=150pt]{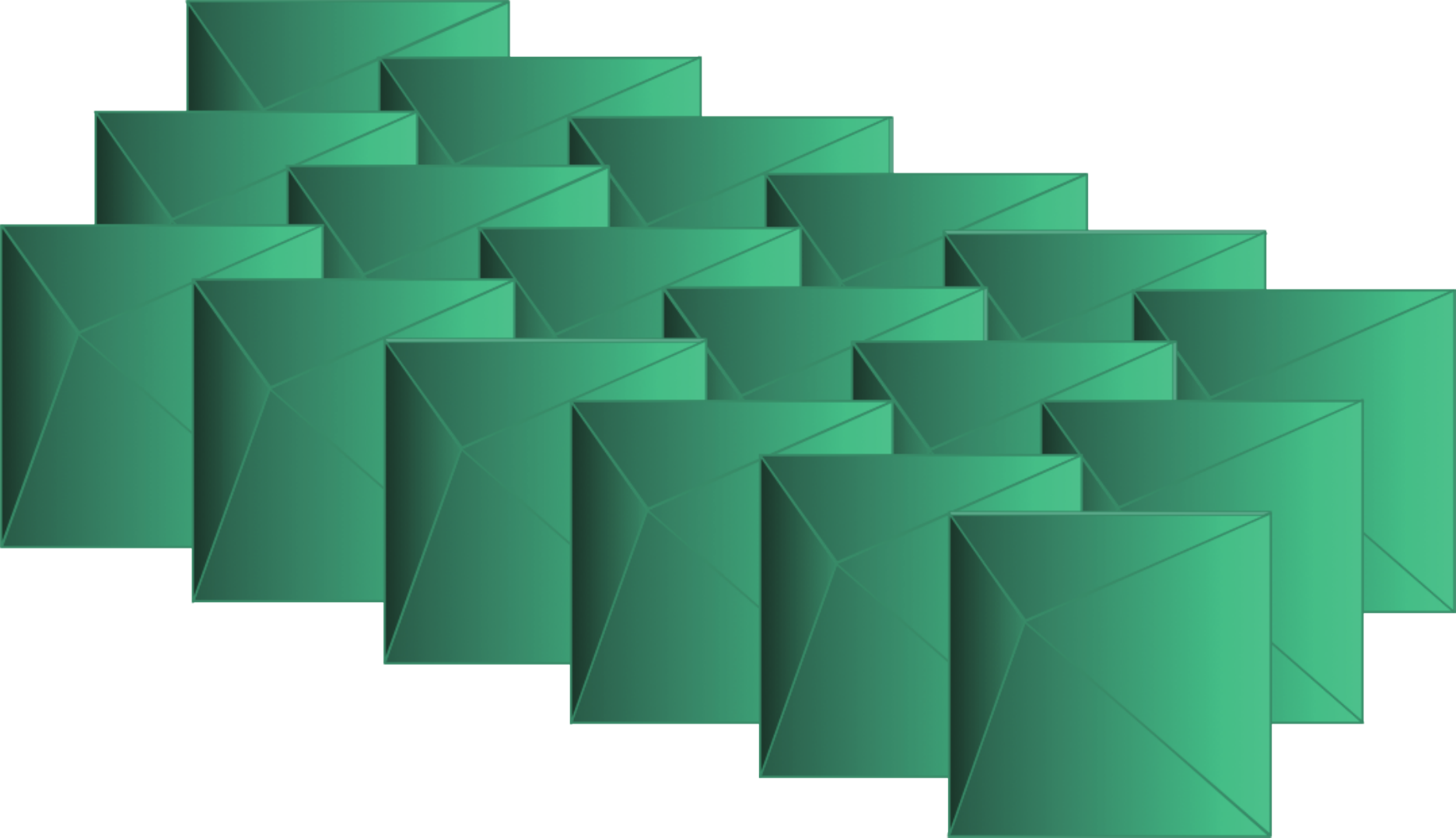}
\caption{Stacking of octahedra}
\label{stacking_octa}
\end{figure}

These structures have a feature: when the border is fixed along the perimeter (see Fig. \ref{model}), the structure becomes rigid and does not collapse.

\begin{figure}[!htb]
 \begin{minipage}{.4\textwidth}
 \centering\includegraphics[width=\linewidth]{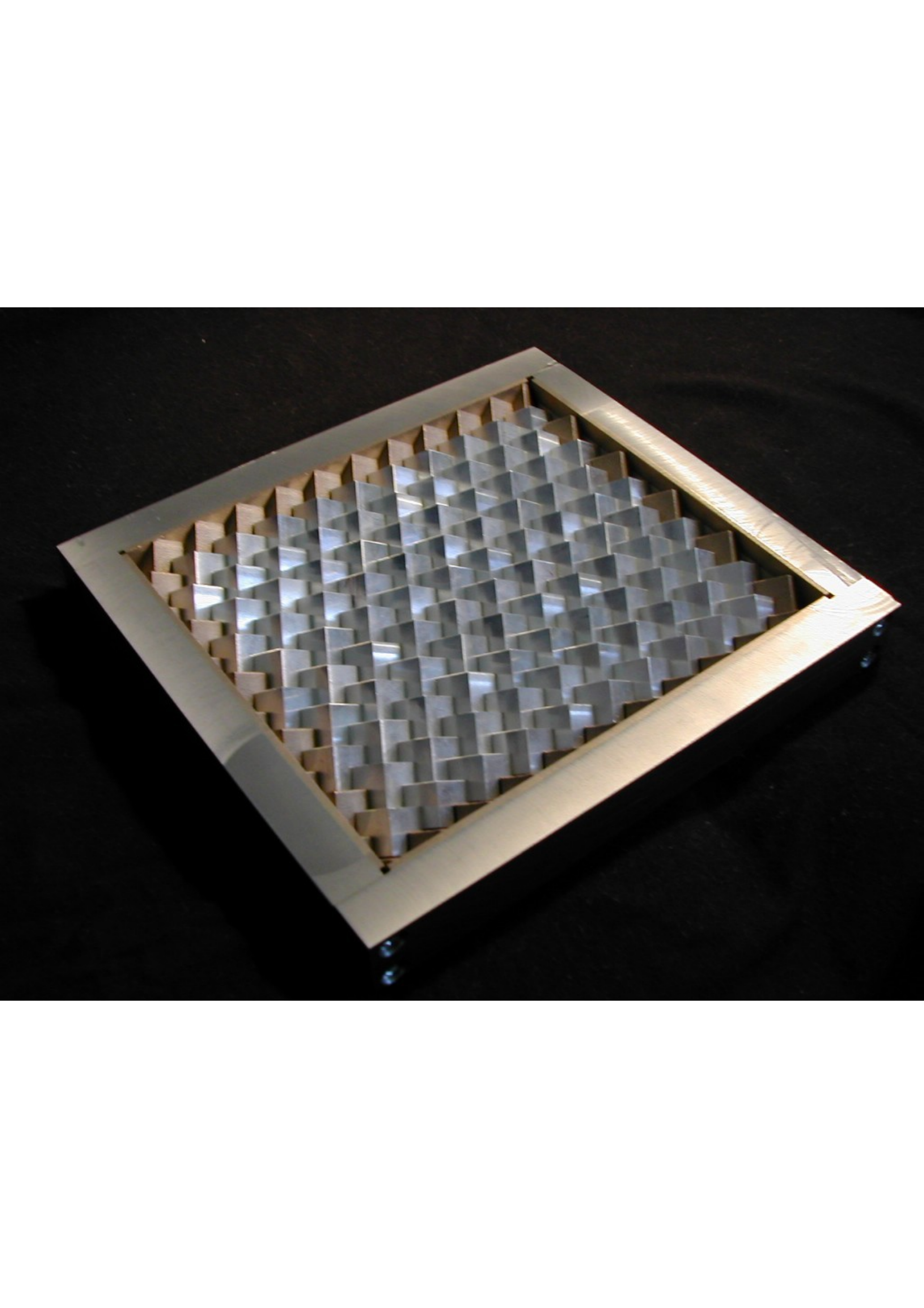}
  \end{minipage}\hfill
   \begin{minipage}{.4\textwidth}
 \centering\includegraphics[width=\linewidth]{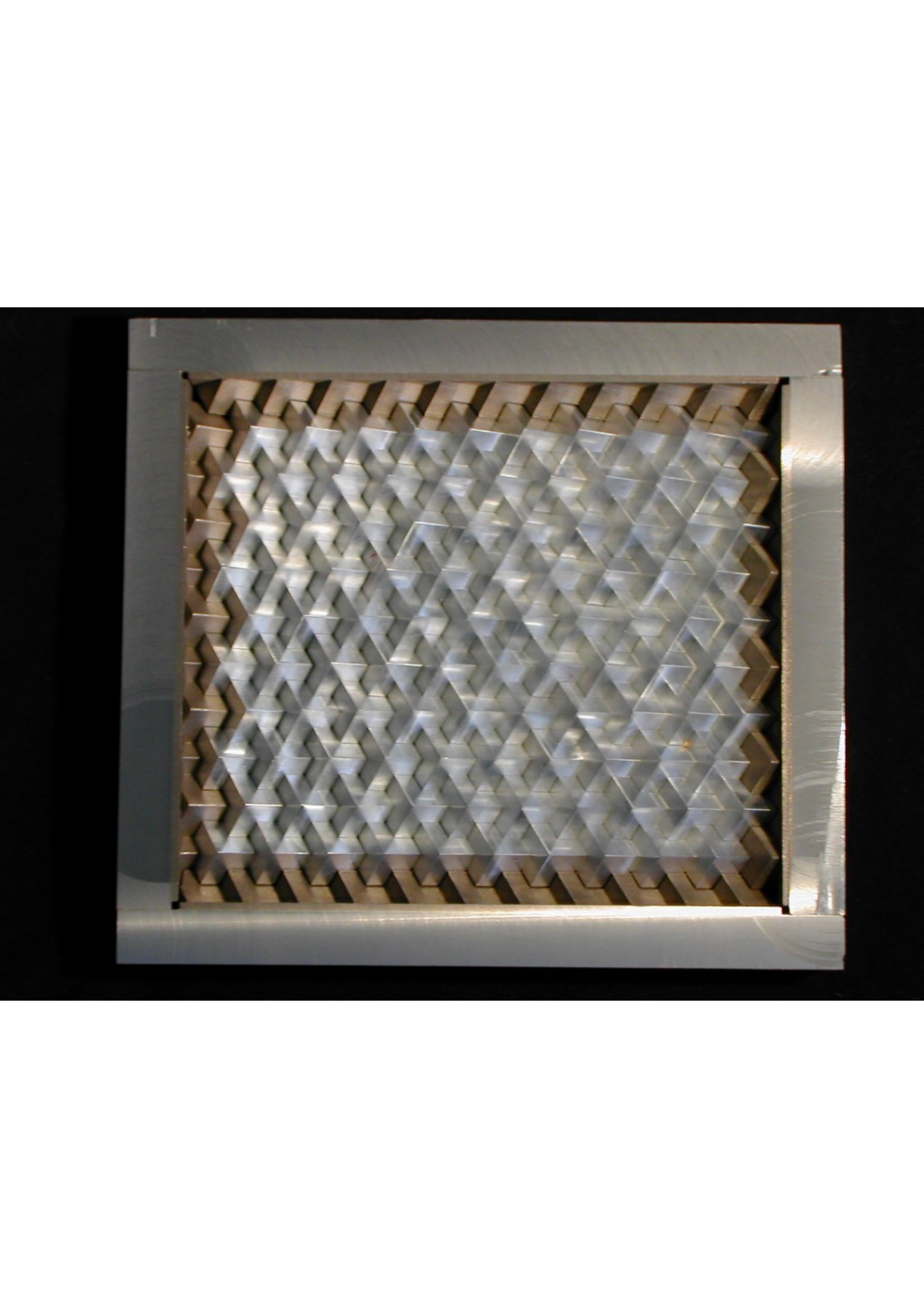}
  \end{minipage}

  \vspace{-3cm}
  \centering\includegraphics[width=140pt]{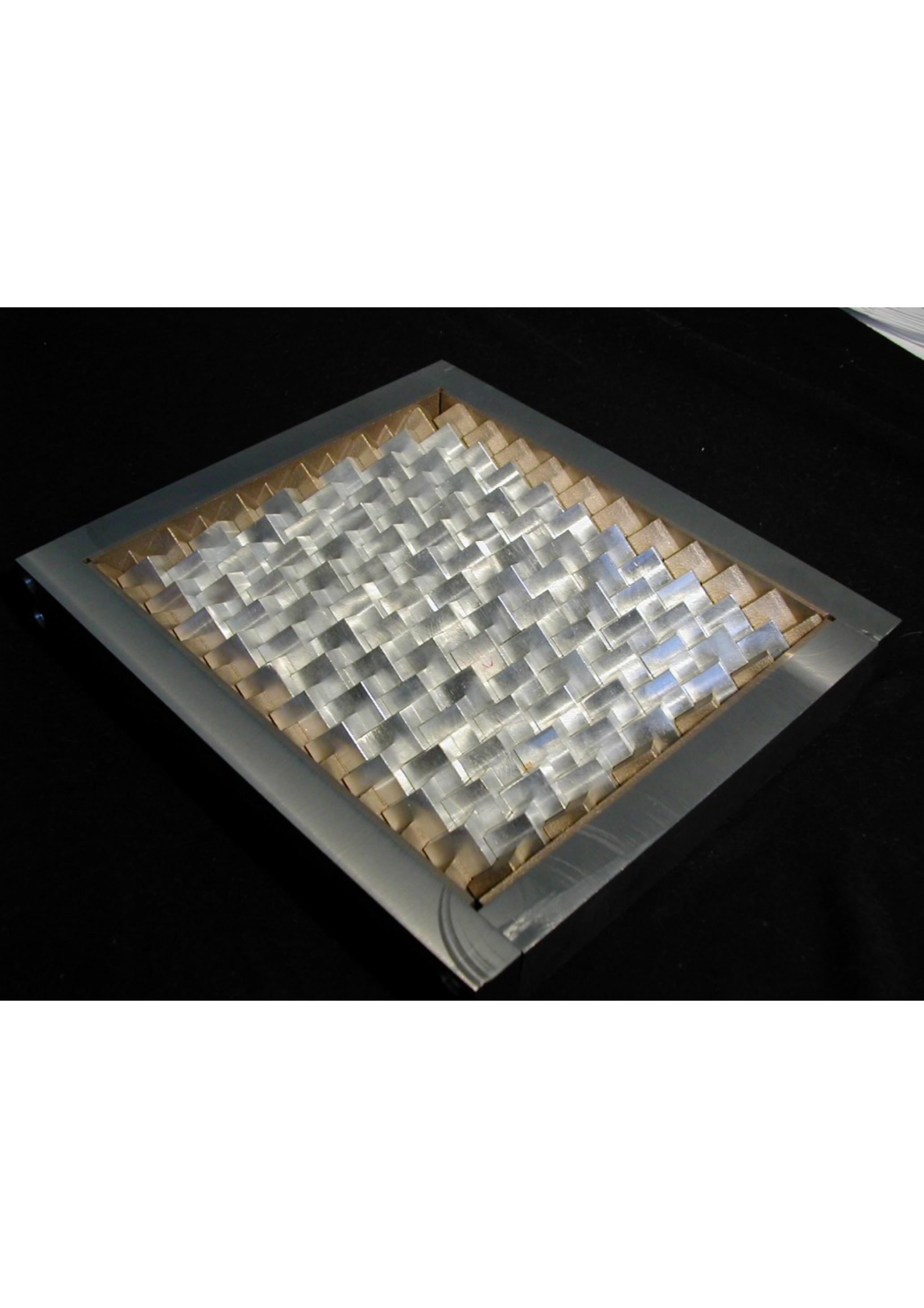}
    \vspace{-1cm}
    \caption{Model}\label{model}
  \end{figure}

In addition to quasi-flat structures, there are structures where an interlocking occurs in several layers simultaneously (see Fig. \ref{stuck}). These are questions related to the {\it cladding} of a flat layer with a fixed perimeter.

\begin{figure}[!htb]

  \centering\includegraphics[width=200pt]{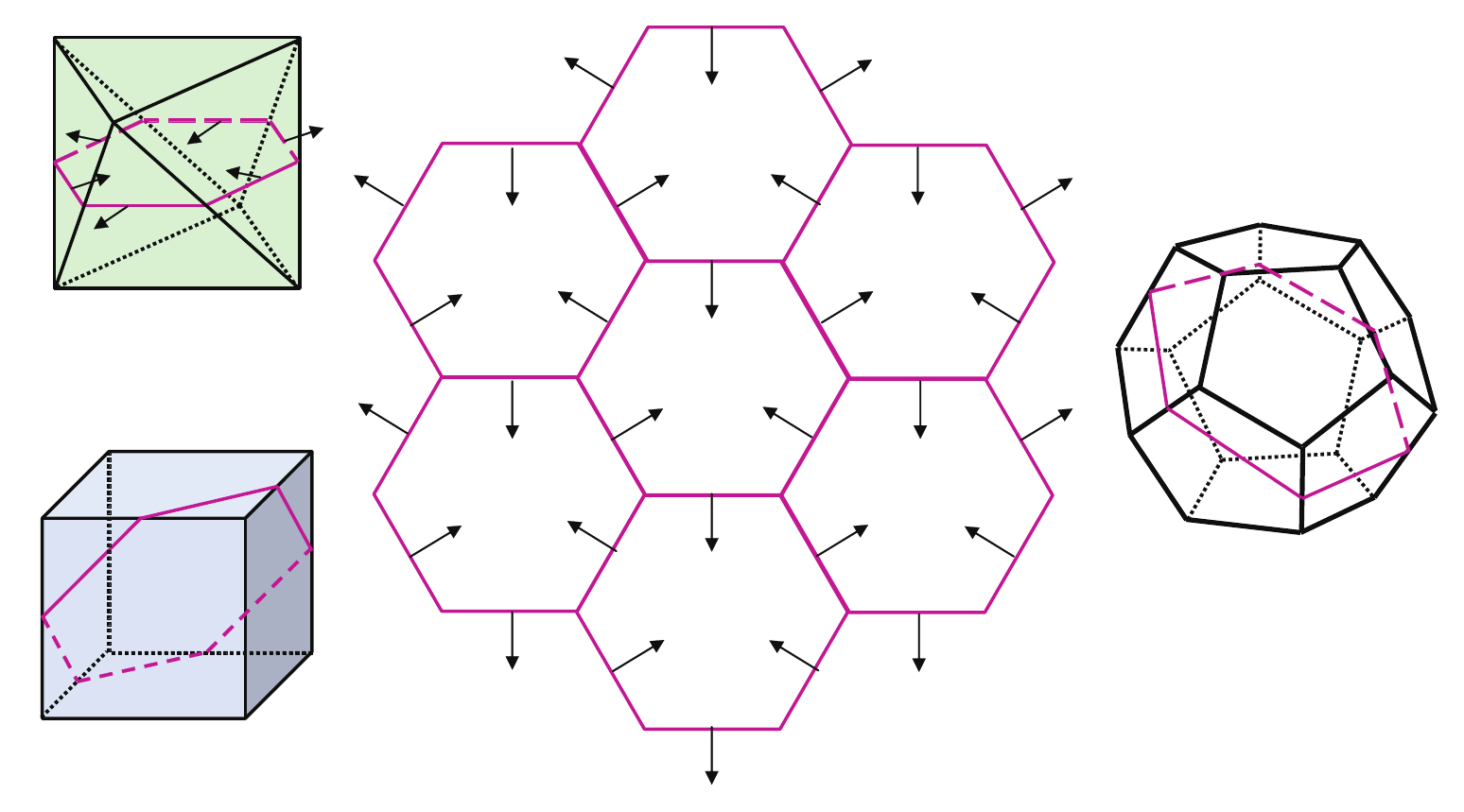}
    \caption{Interlocking in layers}\label{stuck}
  \end{figure}
 
 In this paper, we consider principally different structures. Self-interlocked {\it decahedra} are assembled from thin tiles, from which, in turn, second order structures are assembled. In particular, the construction of a {\bf column} composed of decahedra, which is stable when two extremes, but not the entire boundary of the layer, are fixed, as in the structures studied earlier. In addition, we present a structure composed of flat tiles (such an arrangement is not possible on the plane).
Apparently, this paper is the first of this kind both in terms of the fact that it is enough to hold the two extreme objects of the column, and in terms of working with two-dimensional elements.

These constructions are interesting in the following point of view. Previously created self-interlocking structures were {\bf rigid}, at the same time be interesting structures in a certain (controlled) sense {\bf flexible}, whose elements can move within a certain framework. This can be interesting both in an architectural point of view, and when controlling the process of relaxation and damping of various kinds of waves, as well as other manipulations.


This paper is organized as follows. In Section \ref{Decaedr} a decahedron obtained from a dodecahedron by removing two opposite faces and a small ``pull'' of some faces is constructed. Then, in Section \ref{Structure}, we present the self-interlocking structure in the form of
a ``tower'' consisting of a set of ``sequentially nested'' decahedra, each of which is ``blocked inward'' by a decagon and blocked on the outside by adjacent side faces.
In Section \ref{Structure1} we discuss decahedron necklaces and pillars.
Finally, Section \ref{Openproblems} is devoted to some open problems.

\section{Construction and coordinates of decahedra}\label{Decaedr}
Consider the faces of a dodecahedron on the plane (obtained, for example, by projecting all the faces inside one face). The rules for  ``stretching edges'' are shown in Fig. \ref{long}.

 \begin{figure}
\centering\includegraphics[width=200pt]{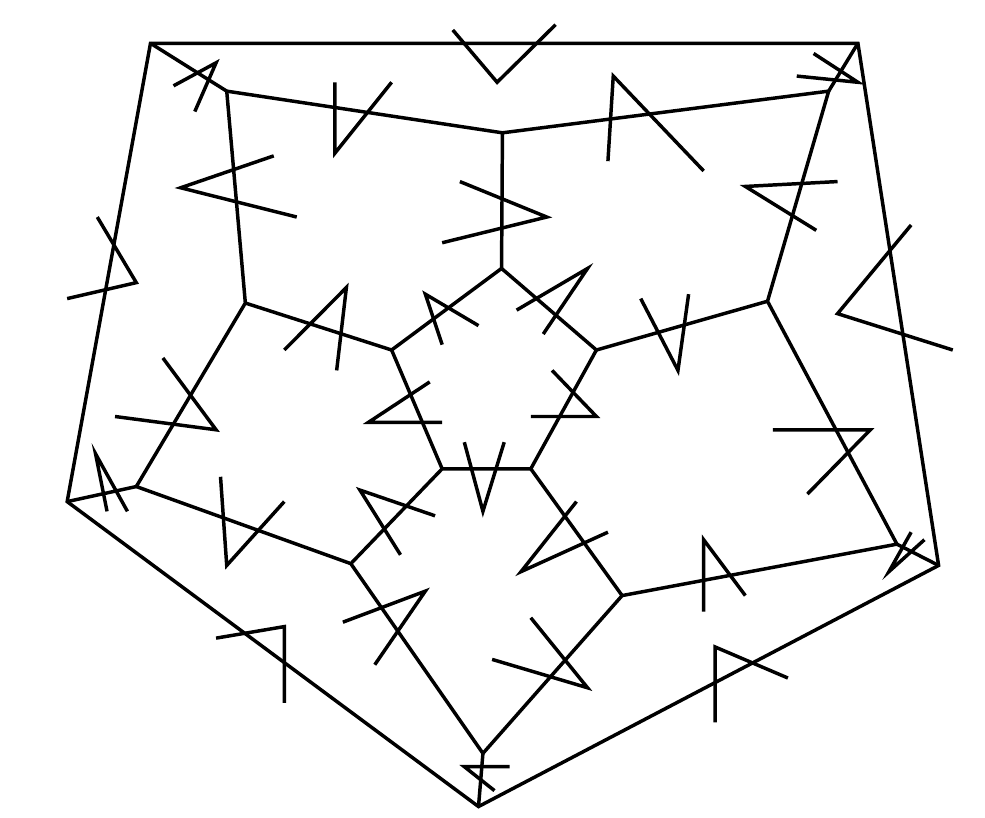}
\caption{ Faces of a dodecahedron, which are blocked by each others}
\label{long}
\end{figure}

In Fig. \ref{long} twelve faces of the dodecahedron $M$ are depicted 
in such a way that for each pair of two edge-adjacent faces one face is marked as ``bigger'' than another ($a> b$ means that ``the face $ b $ is blocked by the face $a$''). 
From the construction, the following lemma can be obtained.

\begin{lm}
Each face of the decahedron $M$ is blocked by three of the five faces adjacent to it.
\end{lm}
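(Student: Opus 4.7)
My plan is to prove the lemma by direct inspection of the blocking pattern depicted in Fig. \ref{long}, reduced to essentially one case by the symmetries of the construction.

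The setup is that the ``pulling'' rule in Fig. \ref{long} assigns, at every edge of the parent dodecahedron, a definite inequality ``$a>b$'' between the two faces sharing that edge, so each face of $M$ carries on each of its incident edges either a ``blocker'' or a ``blocked'' label. The lemma asserts that at every face exactly three of the five such labels read ``blocked''. A preliminary global consistency check already makes the count plausible: if the two removed polar pentagons are pulled outward over all of their neighbors --- so that they are the two largest faces in the pulling order --- they each contribute a ``blocker'' label at the five boundary edges of the decahedron, which accounts for one of the three required ``blocker'' neighbors at each of the ten decahedral faces. The remaining two ``blocker'' neighbors must come from the four interior adjacencies, and the total $20$ of interior edges is exactly enough to supply an average of two ``blocker'' endpoints per face.

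I would then use the $5$-fold rotational symmetry around the axis through the two removed faces, together with the half-turn exchanging the upper and lower rings, to reduce the verification to a single representative face of $M$. For that representative I enumerate its four non-polar neighbors --- two within its own pentagonal ring and two in the opposite ring --- read off from Fig. \ref{long} the direction of the pulling on each of these four shared edges, and check that exactly two of them point toward the chosen face. Combined with the one ``blocker'' edge coming from the polar neighbor, this produces the required total of three.

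The only real obstacle I foresee is bookkeeping: carefully transcribing the inequalities from Fig. \ref{long} into a face-by-face table and applying the symmetry identifications consistently, so that the inward/outward count on the representative face is correct. Once this is done, the lemma reduces to a short and transparent verification with no further geometric input needed.
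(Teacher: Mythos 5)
The paper offers no written proof of this lemma --- it simply asserts that it ``can be obtained from the construction,'' i.e., read off from the ordering of faces in Fig.~\ref{long} --- so your plan of verifying the blocking pattern by direct inspection, organized by the $\Z_{5}$-symmetry and the global edge count (which, as you note, forces the two polar faces to dominate all five of their neighbors and leaves exactly $20$ interior edges to supply two blockers per face), is the same approach, just made explicit. One small caution: the paper's later arguments treat the lower and upper belts separately rather than relating them by a symmetry, so you should verify one representative face from each belt instead of assuming the ordering in Fig.~\ref{long} is invariant under an upper--lower exchange.
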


Coordinates of the vertices of the dodecahedron $M$ in cylindrical coordinates
are written in Fig. \ref{dodecahedron}. In this paper, the bottom 5 pentagons, which are colored by blue in Fig. \ref{dodecahedron}, are called {\em the bottom belt}, and the top 5 pentagons, which are colored by red in Fig. \ref{dodecahedron}, are called {\em the top belt}.

The faces with wings of the decahedron ${\widetilde M}$ are obtained from the faces of the dodecahedron $M$ as follows. For each pentagonal face $p$ of dodecahedron $M$ centered at the point $O_ {p}$ a face $p'$ with wings is obtained from the face $p$ by attaching wings to edges along which the face $p$ is bigger than the adjacent face, see Fig.~\ref {faces} and~\ref{wing_decahedron}.

 \begin{figure}
\centering\includegraphics[width=150pt]{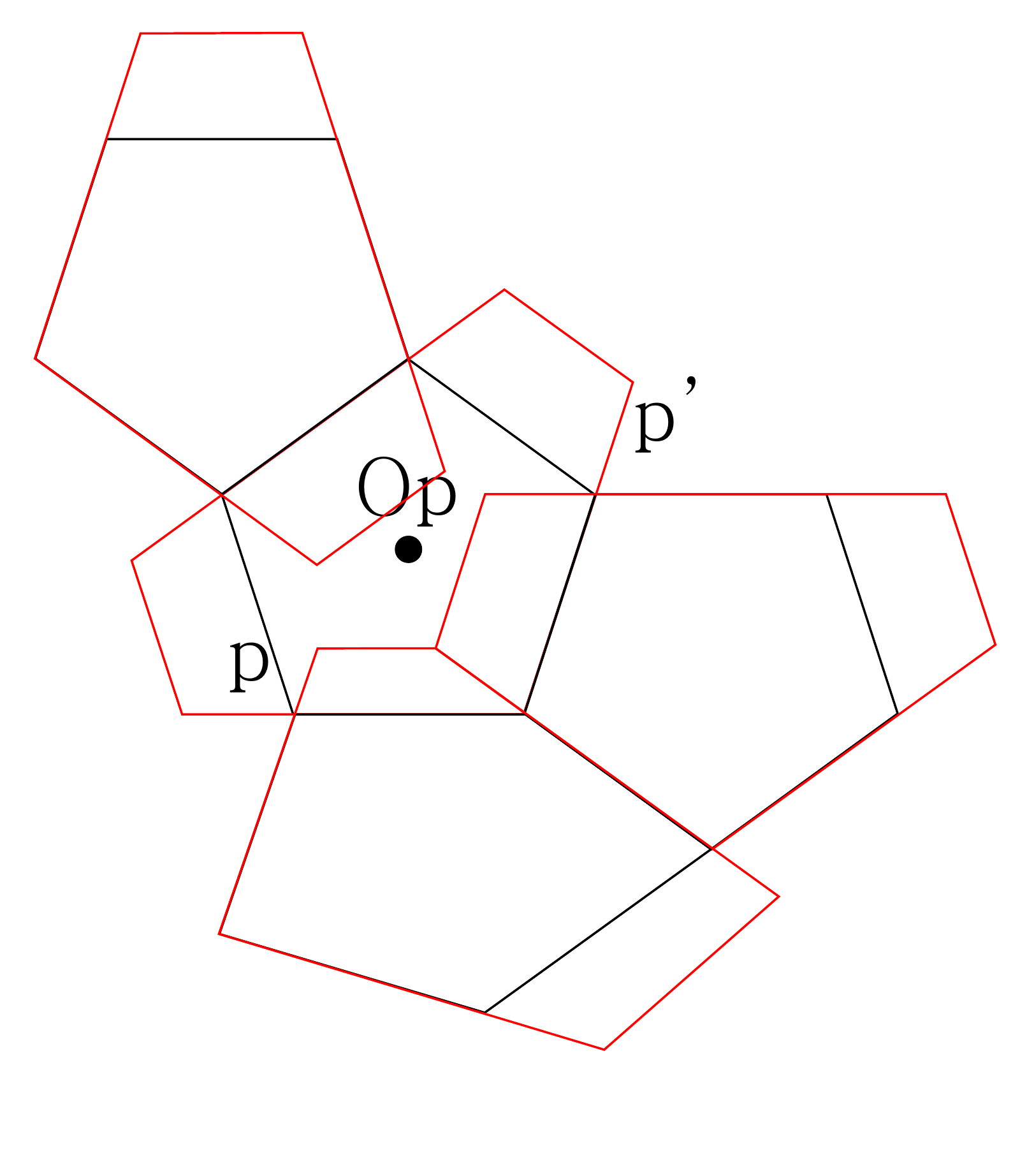}
\caption{Faces with wings of the decahedron ${\widetilde M}$}
\label{faces}
\end{figure}

 \begin{figure}
\centering\includegraphics[width=300pt]{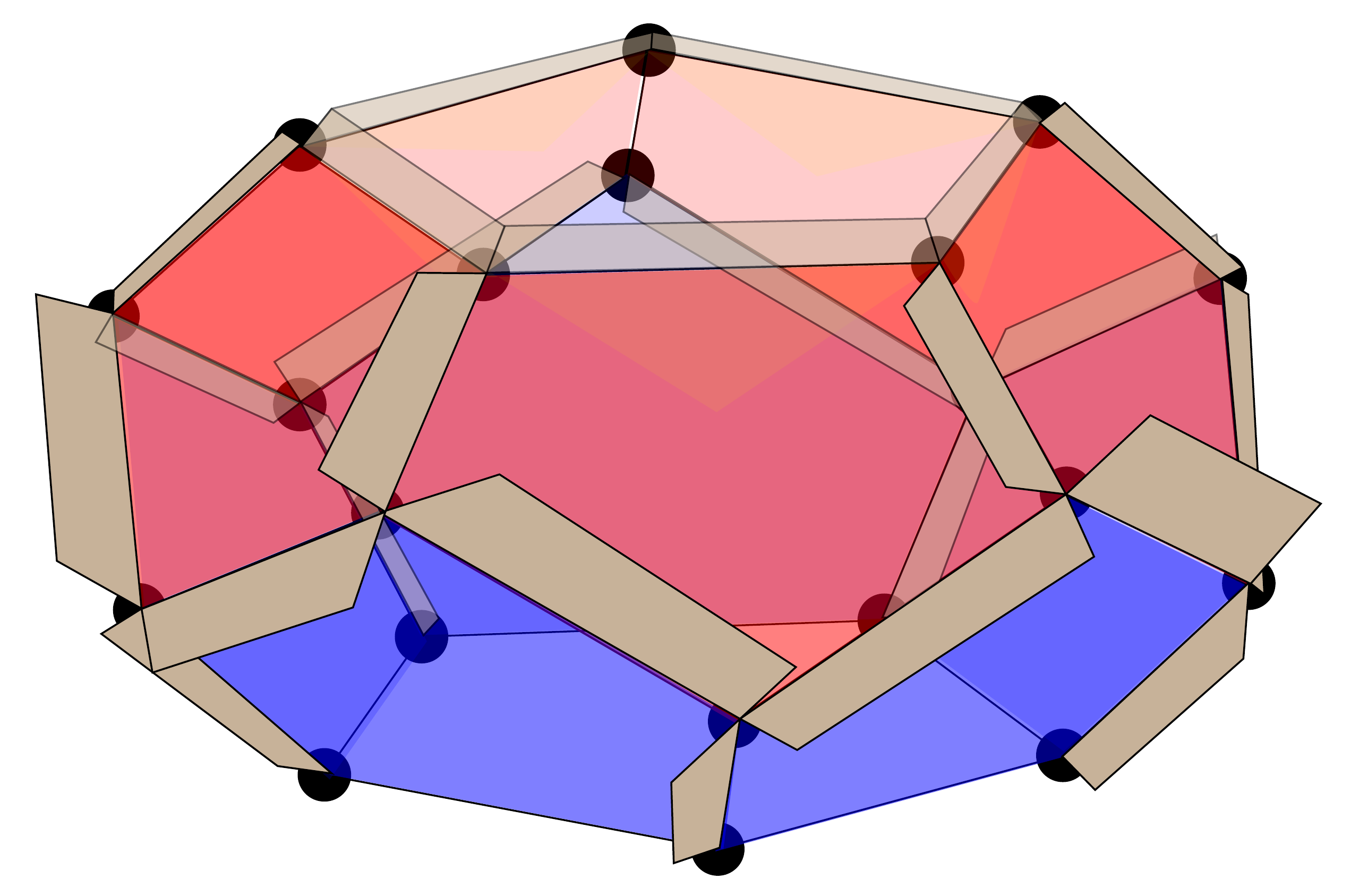}
\caption{Faces with wings of the decahedron ${\widetilde M}$ in $\mathbb{R}^{3}$}
\label{wing_decahedron}
\end{figure}

We say that the face $\Pi$ with wings of the decahedron ${\widetilde M}$ {\em cannot move in the direction of the vector $ \overrightarrow{v}$} if it is impossible to move the face
$\Pi$ parallel along the vector $ \overrightarrow{v}$ in $cl(\mathbb{R}^{3} \backslash ({\widetilde M} - \Pi))$. For each face, the direction of the vector $ \overrightarrow{w} $ such that $ \overrightarrow{w} \cdot \overrightarrow{v}> 0 $ or $ \overrightarrow{w} \cdot \overrightarrow{u}> 0$ is called {\em outward}, where $ \overrightarrow{v} $ and $\overrightarrow{u}$ are vectors perpendicular to one of faces of the lower belt and the upper belt as described in Fig. \ref{faces_R3}. If $ \overrightarrow{w} \cdot \overrightarrow{v} <0 $ or $ \overrightarrow{w} \cdot \overrightarrow{u} <0 $, then the direction of the vector $ \overrightarrow{w} $ is called {\em inward}. If $\overrightarrow{w} \cdot \overrightarrow{v} =0$ or $ \overrightarrow{w} \cdot \overrightarrow{u} =0 $, then we call it {\em a vector on the face}.

 \begin{figure}
\centering\includegraphics[width=200pt]{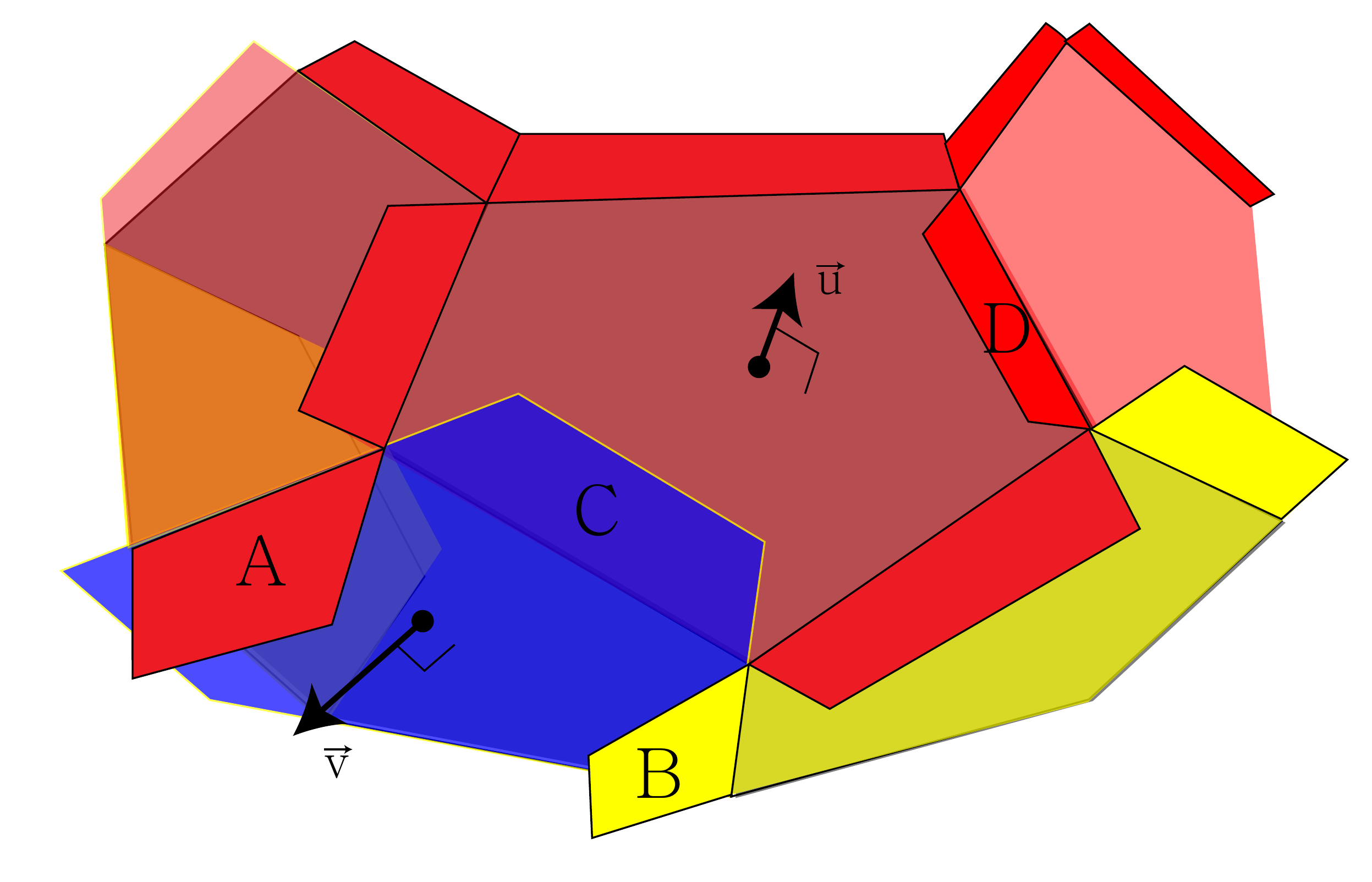}
\caption{Faces with wings of the decahedron ${\widetilde M}$ are blocked when it is translated along the perpendicular vector to the faces}
\label{faces_R3}
\end{figure}

\begin{lm}\label{lem:out}
Each face with wings of the decahedron $ {\widetilde M}$ cannot move outward if other faces are fixed.
\end{lm}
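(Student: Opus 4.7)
The plan is to combine the previous lemma on the ``bigger-than'' relation with a local geometric analysis of each wing, and then patch the local obstructions together by an elementary Farkas-type argument. By the previous lemma and the wing construction in Fig.~\ref{faces}, the face $\Pi$ of $\widetilde M$ has three blocking neighbors $r_1,r_2,r_3$ in $\widetilde M$, each carrying a wing $W_i$ coplanar with $r_i$ and attached along the shared edge $e_i=\Pi\cap r_i$. Crucially, the rule in Fig.~\ref{long} places the removed top and bottom pentagons of $M$ on the ``small'' side of every incident pair, so passing from $M$ to $\widetilde M$ preserves all three blockers of each belt face.

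First I would analyze a single wing $W_i$ in local coordinates. Placing $e_i$ on the $x$-axis, $\Pi$ in the half-plane $\{y\le 0,\ z=0\}$ with outward normal $\vec n_\Pi=\vec e_z$, and $r_i$ inclined at the dodecahedral dihedral angle so that the body of $r_i$ fills the quadrant $\{y>0,\ z<0\}$, the wing $W_i$ occupies the opposite quadrant $\{y<0,\ z>0\}$ within the plane of $r_i$, hanging just above a thin strip of $\Pi$ adjacent to $e_i$. A short direct calculation then shows that for all sufficiently small $t>0$, the translated face $\Pi+t\vec w$ meets $W_i$ iff both $\vec w\cdot \vec n_\Pi>0$ and $\vec w\cdot \vec n_{r_i}>0$, where $\vec n_{r_i}$ is the outward unit normal of $r_i$. (Motion with $\vec w\cdot \vec n_\Pi\le 0$ but $\vec w\cdot \vec n_{r_i}>0$ is instead blocked by the body of $r_i$ itself.)

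Combining the three local obstructions, the face $\Pi$ can be translated in direction $\vec w$ without meeting any $W_i$ exactly when $\vec w\cdot \vec n_{r_i}\le 0$ for every $i\in\{1,2,3\}$. To finish, I would verify that this system of inequalities is inconsistent with $\vec w\cdot \vec n_\Pi>0$; by Farkas's lemma this is equivalent to the statement that $\vec n_\Pi$ lies in the open conic hull of $\vec n_{r_1},\vec n_{r_2},\vec n_{r_3}$. I would check this directly from the cylindrical coordinates of $M$ in Fig.~\ref{dodecahedron}, using the fivefold symmetry of the belts to reduce to one top-belt and one bottom-belt representative. The main obstacle is precisely this conic-hull verification: one must confirm, for the specific blocking pattern chosen in Fig.~\ref{long}, that the three normals $\vec n_{r_i}$ actually surround $\vec n_\Pi$ rather than clustering to one side of it. Granted this small explicit linear-algebra computation, the local analysis forces a collision of $\Pi$ with some $W_i$ under every outward translation, contradicting the hypothesis of free outward motion and proving the lemma.
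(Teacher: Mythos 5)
Your overall strategy --- treating each wing of a blocking neighbour $r_i$ as imposing the linear obstruction $\vec w\cdot \vec n_{r_i}\le 0$ on any admissible translation direction $\vec w$, and then showing that these obstructions together with $\vec w\cdot\vec n_\Pi>0$ form an infeasible system --- is a sensible and considerably more explicit formalization than the paper's own argument, which simply points at Fig.~\ref{faces_R3}, observes that translation along the single normal direction $\vec v$ is stopped by two wings $A$ and $B$, and then asserts that all outward directions are blocked. However, your proof has a genuine gap, and it sits exactly where you say the ``main obstacle'' is: the claim that $\vec n_\Pi$ lies in the conic hull of $\vec n_{r_1},\vec n_{r_2},\vec n_{r_3}$ is never verified, and it is not a routine detail --- it is the entire content of the lemma. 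Whether it holds depends on which three of the neighbours block $\Pi$ in Fig.~\ref{long}: projecting the five neighbour normals of a pentagonal face onto the plane orthogonal to $\vec n_\Pi$ gives five directions $72^\circ$ apart, and three of them positively span that plane precisely when they are not three consecutive ones. For a belt face one of the five neighbours is a removed pentagon, so the three blockers are drawn from only four surviving directions spanning a $216^\circ$ arc, and both the ``surrounding'' and the ``consecutive'' configurations are possible a priori. Until you pin down the actual orientation pattern and check this, the argument does not close.

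A second, related problem: if the conic-hull condition were to fail for some face, the lemma could still be true, but only because of obstructions your accounting omits entirely --- namely the wings of $\Pi$ itself, which overhang the two neighbours that $\Pi$ blocks and collide with them whenever $\vec w\cdot\vec n_{q_j}<0$. The paper's proof invokes only two wings to stop the normal direction, which in your framework can never exclude the whole open half-space $\vec w\cdot\vec n_\Pi>0$ (two neighbour normals of a pentagonal face are never antipodal in projection); this mismatch indicates either that the paper is really only certifying the single direction $\vec v$, or that contacts you have not modelled are doing essential work. Your precision is a virtue in that it exposes this issue, but a complete proof would have to include all the edge constraints (the $\le 0$ inequalities from the blockers' wings and the $\ge 0$ inequalities from $\Pi$'s own wings) and verify infeasibility of the resulting system from the explicit coordinates, rather than granting the decisive computation.
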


\begin{proof}[Proof]
Consider a face with wings of the lower belt (blue face) of the decahedron ${\widetilde M}$ in Fig.~\ref{faces_R3}. One can see that if the blue face with wings moves in the direction of the vector $\overrightarrow{v} $, then it is blocked by the wings $A$ and $B$ in Fig.~\ref{faces_R3}. From this observation it follows that the face of the lower belt is blocked in the direction of the vector outward. Similarly, one can show that the face with wings of the upper belt (red face) of the decahedron $ {\widetilde M} $ is blocked by two adjacent wings $ C $ and $ D $, and the proof is complete.
\end{proof}




 \begin{figure}
\centering\includegraphics[width=300pt]{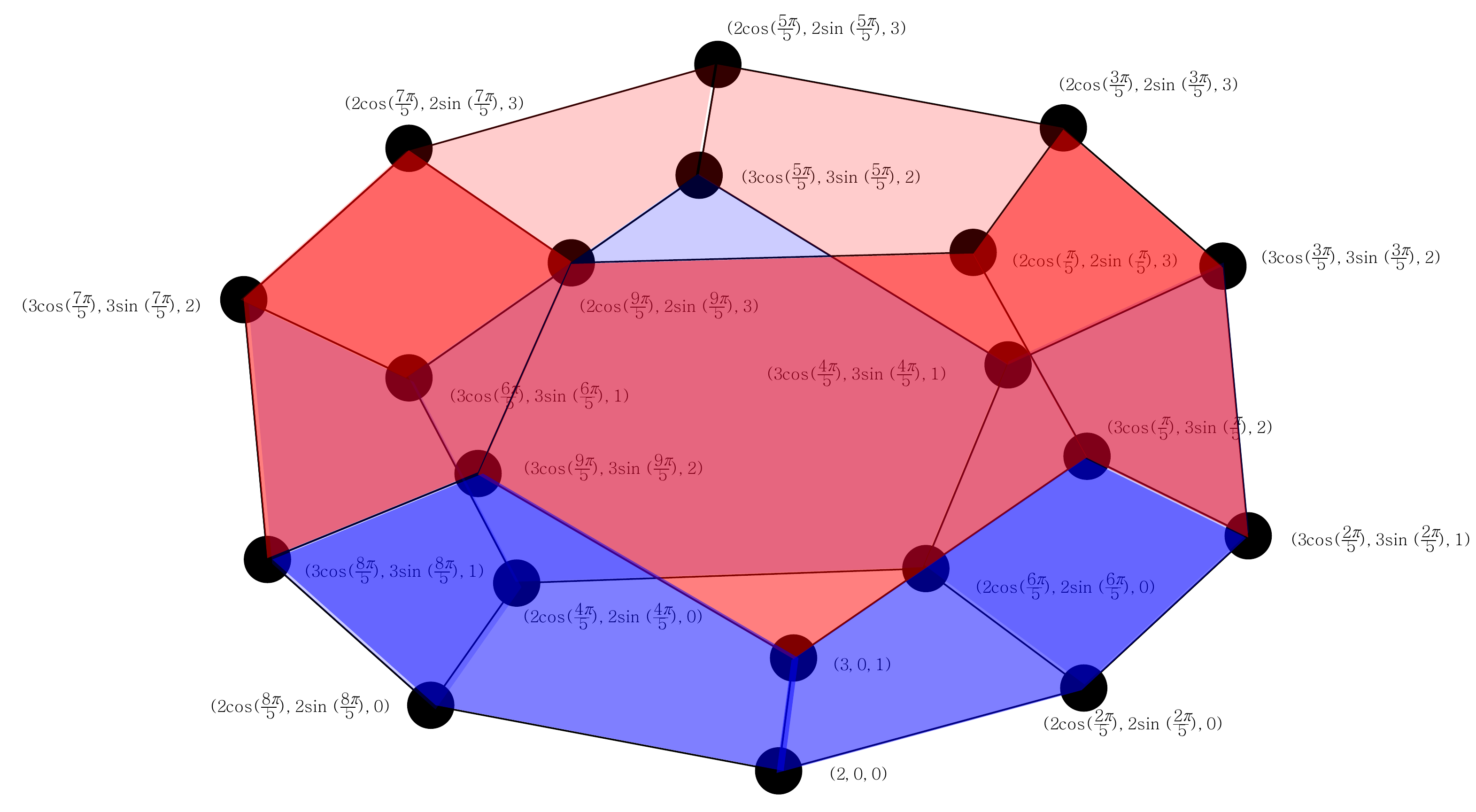}
\caption{Coordinates of vertices of the dodecahedron }
\label{dodecahedron}
\end{figure}

 \begin{figure}
\centering\includegraphics[width=300pt]{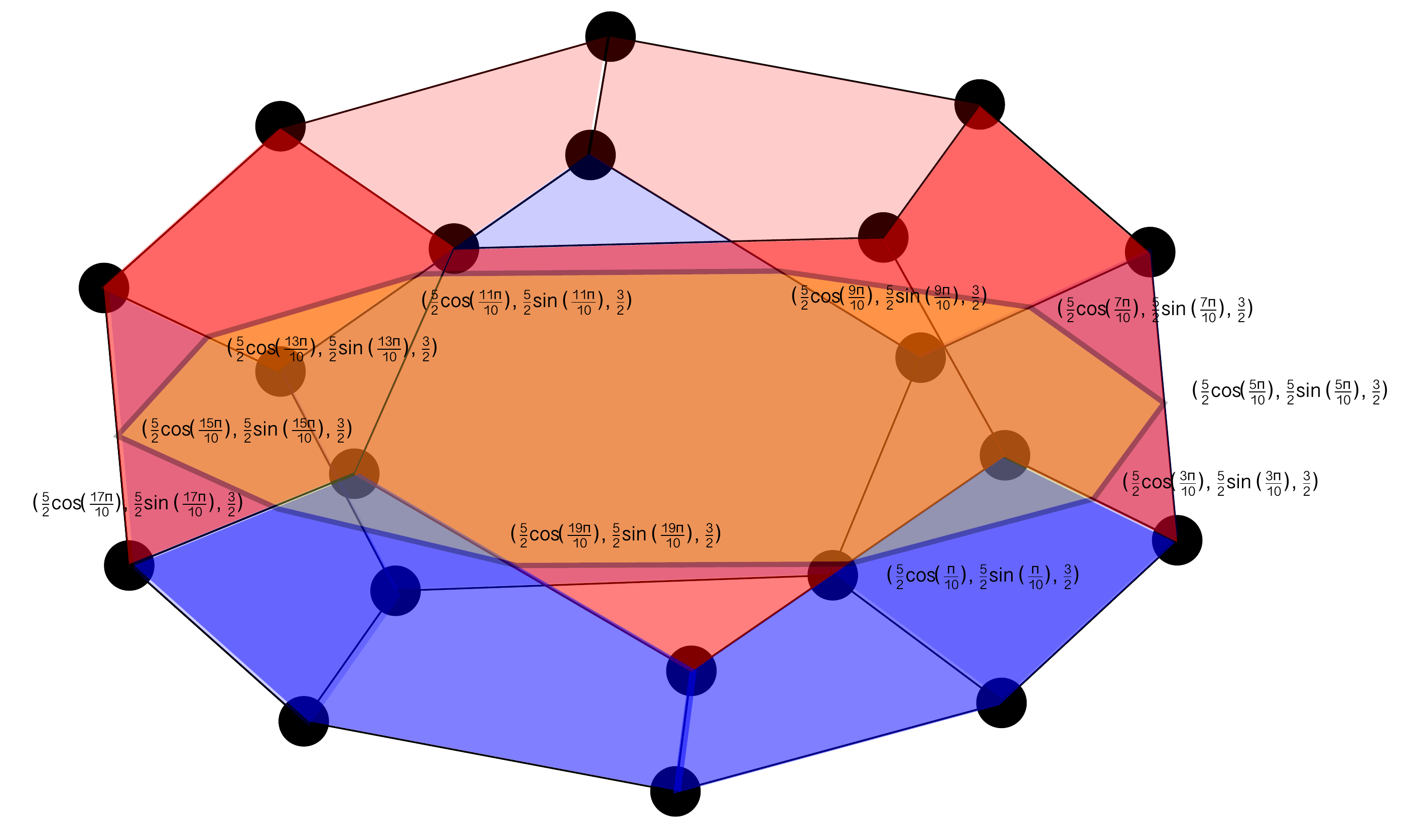}
\caption{Decahedron and decagon}
\label{dodecahedron-1}
\end{figure}

\section{A chain of decahedra. Self-interlocking structures} \label{Structure}
In order to construct a self-interlocking structure of decahedra, we need:

\begin{enumerate}
  \item  to block the faces of each decahedron inward and outward;
  \item  to block the decahedra when they rotate.
\end{enumerate}

The first goal can be achieved simply: we choose a $\Z_{5}$-symmetric decahedron and inside it we place the decagon of the largest possible area (this decagon will be parallel to the empty faces of the decahedron, see Fig.~\ref{dodecahedron-1}). But, this structure cannot block ``rotations'' of faces.
\begin{rem} 
Note that the structure of the decahedron can be changed by making the ``lower belt'' more ``gradual'' (so that we make the angle between the edges of the lower chord and the $Oxy$ plane smaller), and by making the ``upper belt'' more ``steep'' (so that the angle with the Oxy face is closer to line)
\end{rem}

Now let us construct an infinite chain $\mathcal{C}_{\infty}$ of nested decahedra (with faces with wings).

Namely, the $k$-th decahedron, $1 \leq k $ of the decahedron chain  $\mathcal {C}_{\infty}$ has coordinates:

$$\left\{\left(2\cos\left(\frac{(r(k)+2s\pi}{5}\right),2\sin\left(\frac{(r(k)+2s)\pi}{5}\right), 3(k-1)+t) \right)\right\}_{s=0,t=0}^{4\ \ \ 3}$$
where $r(k)$ is equal to $k$ modulo $2$. Note that the coordinates $x, y$ depend on the parity of $k$. The chain of decahedra $\mathcal{C}_{\infty}$ with wings is depicted in Fig.~\ref{wing_two_decahedron}.
 \begin{figure}
\centering\includegraphics[width=200pt]{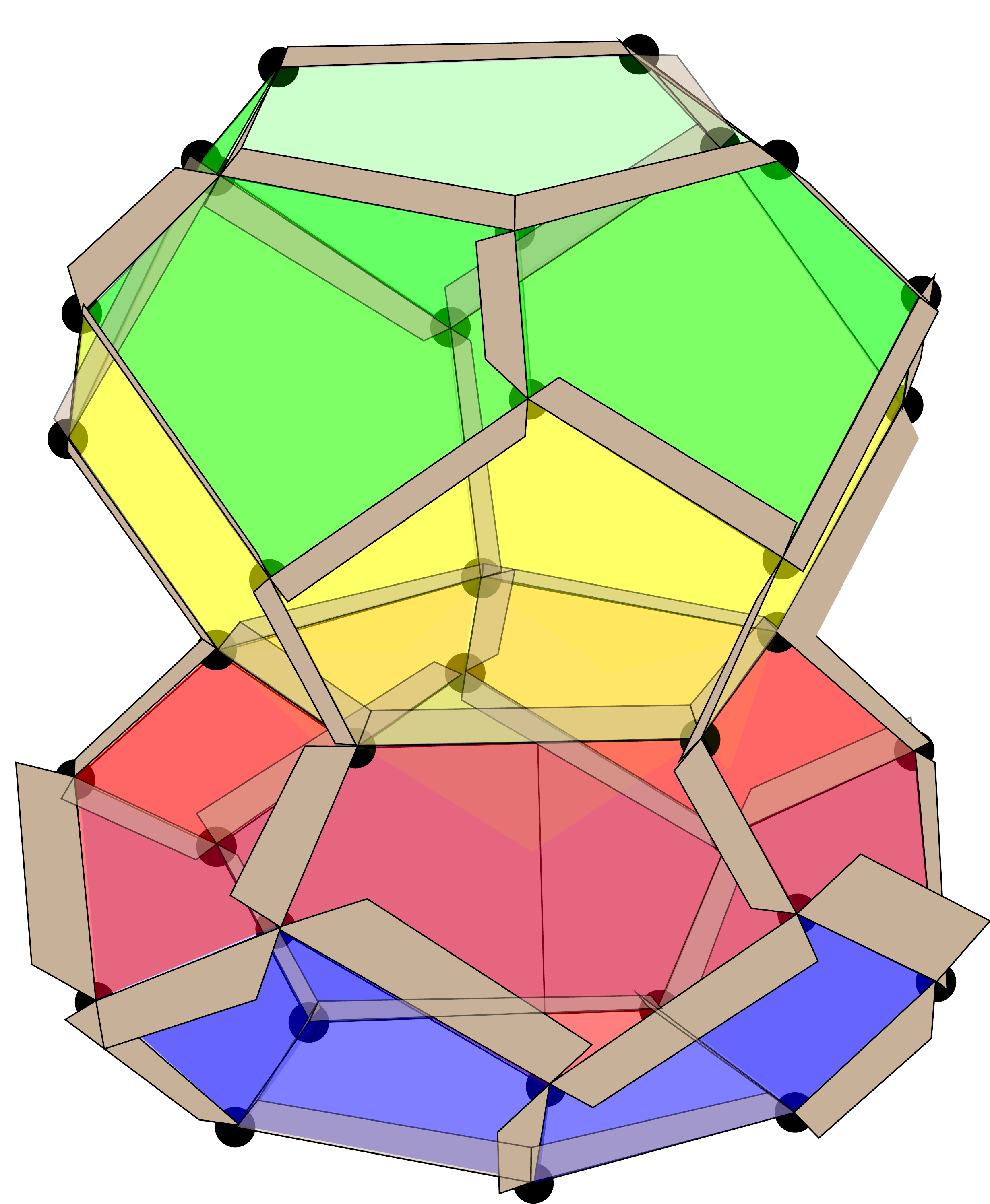}
\caption{$n$-th and $n+1$-th decahedra with faces with wings}
\label{wing_two_decahedron}
\end{figure}

\begin{lm}\label{lem:into}
Each face with wings of a decahedron of the chain $\mathcal{C}_{\infty}$ cannot move inward if other faces are fixed.
 \end{lm}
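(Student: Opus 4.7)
The plan is to show that, unlike outward motion (Lemma~\ref{lem:out}) which is blocked by wings of the same decahedron, inward motion of a face $\Pi$ of $\widetilde M_k$ is blocked by a face-with-wings of a neighboring decahedron $\widetilde M_{k\pm 1}$ in the chain. The reason is structural: the interior of a single $\widetilde M_k$ is hollow (its top and bottom pentagons are removed), so nothing from $\widetilde M_k$ itself can obstruct a face translating along its own inward normal. The essential geometric ingredient is the parity condition $r(k) \equiv k \pmod 2$ in the coordinate formula, which forces consecutive decahedra in $\mathcal C_\infty$ to be rotated by $\pi/5$ around the common vertical axis. This rotation staggers the upper belt of $\widetilde M_k$ against the lower belt of $\widetilde M_{k+1}$, so that each upper-belt pentagon of $\widetilde M_k$ is flanked by two lower-belt pentagons of $\widetilde M_{k+1}$ whose edges and wings cross the inward normal plane of $\Pi$.

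Concretely, I would split cases according to whether $\Pi$ lies in the upper or lower belt of $\widetilde M_k$. By the up--down symmetry of the construction (reflection in the plane $z = 3(k-1) + 3/2$, swapping the roles of $\widetilde M_{k-1}$ and $\widetilde M_{k+1}$), it suffices to treat one case; take $\Pi$ in the upper belt. I then select the lower-belt face $\Pi'$ of $\widetilde M_{k+1}$ whose projection on the plane $z = 3k$ is offset from that of $\Pi$ by $\pi/5$ and whose vertical extent overlaps that of $\Pi$. Using the coordinates of Fig.~\ref{dodecahedron} together with the wing description of Figs.~\ref{faces} and~\ref{wing_decahedron}, a direct check shows that a wing of $\Pi'$ enters the open half-space on the inward side of $\Pi$'s normal plane. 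Any positive inward translation of $\Pi$ then immediately collides with that wing, yielding the lemma.

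The main obstacle is the explicit coordinate verification that a wing of $\Pi'$, after the rotation by $\pi/5$ and the vertical translation by $3$, actually crosses the inward normal plane of $\Pi$. This reduces to a trigonometric inequality involving the dodecahedral dihedral angle $\arccos(-1/\sqrt 5)$, the coordinates from Fig.~\ref{dodecahedron} (expressible via the golden ratio), and the length of the wings prescribed in Section~\ref{Decaedr}. Once this inequality is established for one pair $(\Pi, \Pi')$, the $\Z_5$-rotational symmetry of each decahedron and the $k \mapsto k+2$ periodicity of the chain extend the conclusion to all faces in all layers.
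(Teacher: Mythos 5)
Your central structural claim --- that ``nothing from $\widetilde M_k$ itself can obstruct a face translating along its own inward normal'' because the decahedron is hollow --- is false, and it sends the whole argument down the wrong path. Obstruction to an inward translation does not have to come from material sitting in front of the face; it comes from contact at the boundary. A face $\Pi$ carries wings over the two adjacent faces that it ``blocks'' (the two smaller neighbours in the sense of Fig.~\ref{long}); each such wing lies in the plane of $\Pi$ on the outer side of the neighbouring face's plane, and since the outward normals of adjacent faces satisfy $\overrightarrow{n}\cdot\overrightarrow{n}'=1/\sqrt5>0$, translating $\Pi$ by $-\eps\overrightarrow{n}$ pushes the part of the wing near the shared edge through the neighbour's plane, i.e.\ into the neighbouring tile. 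This same-decahedron contact is exactly what the paper's proof uses: the lower-belt face is blocked by the adjacent faces $A$ and $C$ together with a single wing $B$ belonging to the upper belt of the $(n-1)$-th decahedron, and the upper-belt face is blocked by the two adjacent faces $D$ and $E$ alone. Your proof instead tries to derive all of the blocking from one lower-belt face of $\widetilde M_{k+1}$, which is not how the interlocking works and, at best, would require a verification you have not supplied.

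Two further gaps. First, the up--down symmetry you invoke (reflection in $z=3(k-1)+3/2$) is not available: the assignment of ``bigger/smaller'' in Fig.~\ref{long}, hence the placement of wings, is not symmetric under exchanging the belts, and indeed the paper's two cases have genuinely different blocking configurations (three blockers for the lower belt, two for the upper belt); you must treat both belts. Second, ``inward'' in this paper is not a single direction but the whole set of directions $\overrightarrow{w}$ with $\overrightarrow{w}\cdot\overrightarrow{v}<0$ or $\overrightarrow{w}\cdot\overrightarrow{u}<0$; a single wing of $\Pi'$ crossing the inward normal plane blocks the normal direction but does not by itself rule out every direction with a small inward component, so the ``once this inequality is established for one pair'' step does not close the lemma. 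The multiple contacts listed in the paper's proof are needed precisely to cover this cone of directions.
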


\begin{proof}[Proof]
Consider a face with wings of the lower belt (a blue face) of one decahedron in Fig.~\ref{into_face_R3}, say, the $n$-th decahedron. One can see that if the blue face moves in the direction of the vector $\overrightarrow{v}$, then it is blocked by the faces $A$, $C$ and the wing $B$. Note that $B$ is the wing of the upper belt of the $(n-1)$-th decahedron. From this observation it follows that the face of the lower belt is blocked in the direction of the vector directed outward. Similarly, it can be shown that a face with wings of the upper belt (red face) of the decahedron is blocked by two adjacent faces $D$ and $E$, and the proof is complete.

 \begin{figure}
\centering\includegraphics[width=200pt]{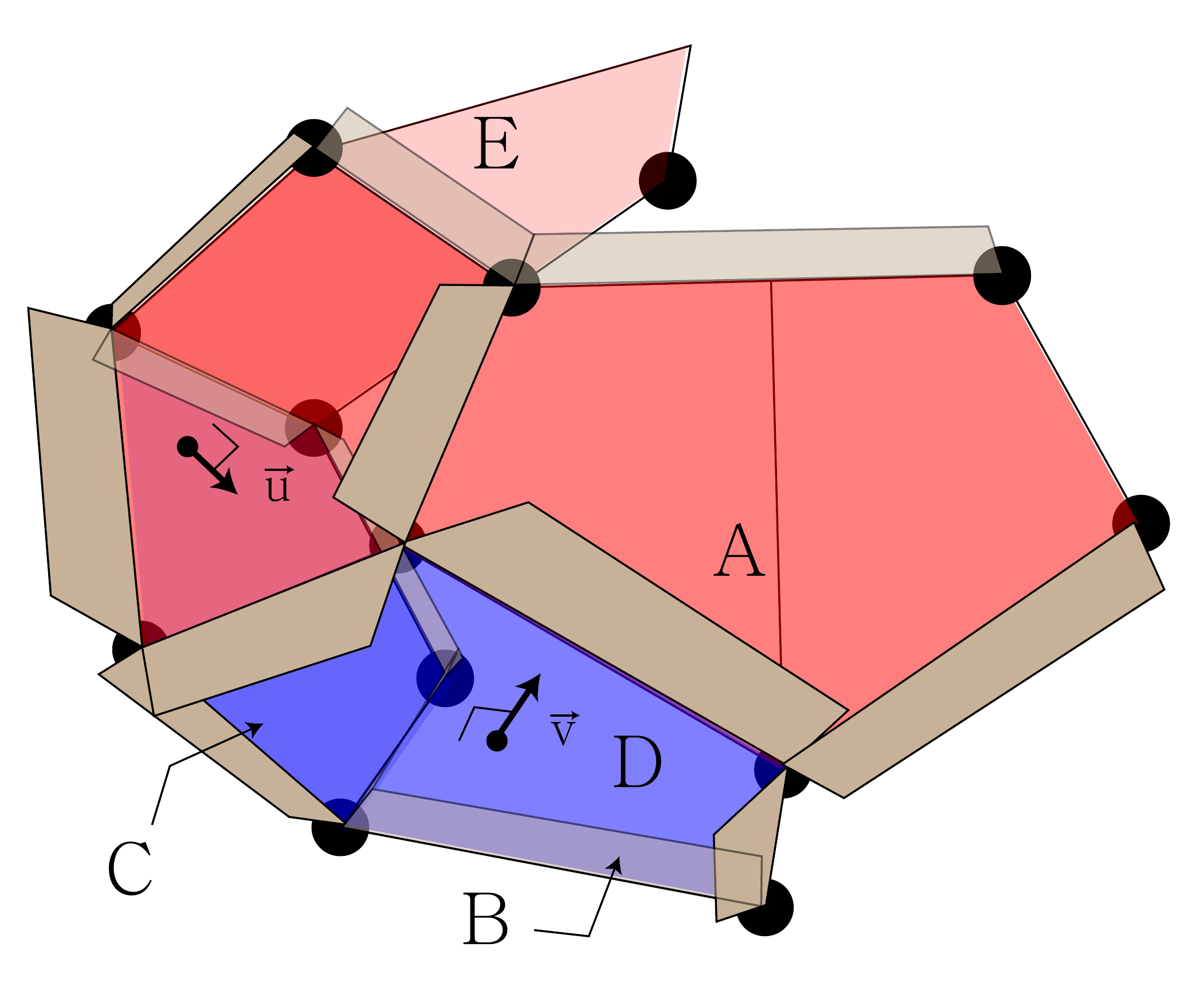}
\caption{Vectors directed inward}
\label{into_face_R3}
\end{figure}

\end{proof}

\begin{lm}\label{lem:slide}
A face with wings of a decahedron of the chain $\mathcal{C}_{\infty}$ cannot move along any vectors on the face if other faces are fixed.
 \end{lm}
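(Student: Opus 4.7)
The plan is to decompose an arbitrary in-plane translation of $\Pi$ in terms of the outward unit normals $\vec n_1,\ldots,\vec n_5$ to the five edges of the underlying pentagon, measured within the plane of $\Pi$. Because the $\vec n_i$ are arranged at $72^\circ$ increments by the $\mathbb{Z}_5$ symmetry of the decahedron, every nonzero tangent vector $\vec w$ satisfies $\vec w\cdot\vec n_i>0$ for at least one index $i$; the argument therefore reduces to producing, for each edge $e_i$, a wing that obstructs a corresponding closed half-plane of tangent motions.

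Edge by edge, the argument splits into two cases depending on which of $\Pi$ or the neighbouring face $\Pi_i'$ carries the wing at $e_i$, as specified by the pattern in Figure~\ref{long}. If $\Pi$ itself has the wing at $e_i$, then that wing already protrudes past $e_i$ over the body of $\Pi_i'$, and translating $\Pi$ with $\vec w\cdot\vec n_i>0$ drives the wing further into $\Pi_i'$, which is fixed; this is obstructed in the same manner as the outward motion analysed in the proof of Lemma~\ref{lem:out}. If instead $\Pi_i'$ carries the wing, then the fixed wing of $\Pi_i'$ overhangs $\Pi$, and translating $\Pi$ in the opposite direction ($\vec w\cdot\vec n_i<0$) forces the edge of $\Pi$ against the underside of that overhang, which is again blocked. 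For the two belt edges shared with neighbouring decahedra of $\mathcal{C}_\infty$, the obstructing wing sits on a face of the $(n\pm 1)$-st decahedron and the same argument goes through, exactly as in the proof of Lemma~\ref{lem:into}. Each edge thus contributes a closed half-plane of forbidden directions of the form $\{\vec w:\varepsilon_i\vec w\cdot\vec n_i>0\}$ for a suitable sign $\varepsilon_i=\pm 1$.

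It remains to combine the five half-planes and to verify that their union exhausts the tangent plane, or equivalently that the five signed normals $\varepsilon_i\vec n_i$ are not all contained in any closed half-plane of $\mathbb{R}^2$. This is the main obstacle of the argument: the exact sign pattern read off from Figure~\ref{long} must be checked to confirm that no angular gap is left uncovered. Because the wing distribution is $\mathbb{Z}_5$-symmetric across each belt and the $\vec n_i$ are equi-spaced at $72^\circ$, the verification reduces to one finite inspection per belt (the lower and upper belts being treated separately and interchanged by reflection symmetry), after which it follows that every tangent translation of $\Pi$ is obstructed by some wing and the lemma is proved.
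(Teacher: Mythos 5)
Your reduction to five half-planes rests on a blocking claim that does not hold for in-plane motions. The dihedral angle of the dodecahedron is obtuse, so the in-plane outward normal $\vec n_i$ of $\Pi$ at the edge $e_i$ has \emph{positive} inner product with the outward normal of the adjacent face $\Pi_i'$. Consequently, translating $\Pi$ within its own plane with $\vec w\cdot\vec n_i>0$ moves $\Pi$'s wing \emph{away} from the outer surface of $\Pi_i'$ that it overhangs, rather than ``driving it further into $\Pi_i'$''; and in the other case, a fixed wing of $\Pi_i'$ that overhangs $\Pi$ lies parallel to the plane of $\Pi$ over the overhung strip, so an in-plane translation of $\Pi$ slides parallel to that wing and is not obstructed by it in either direction. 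Thus the intra-decahedron wings at the five edges do not by themselves furnish the five forbidden half-planes you need; tangential sliding is precisely the case where the mechanism of Lemma~\ref{lem:out} gives nothing. The paper's proof instead partitions the circle of tangent directions at a point $O_1$ (resp.\ $O_2$) into \emph{three} sectors $a_1,a_2,a_3$ (resp.\ $b_1,b_2,b_3$) and, for each sector, exhibits an obstruction coming from the \emph{adjacent decahedron of the chain}: the upper-belt (red) face is stopped by the lower-belt (yellow) faces of the next decahedron via the wings $W_1$, $W_2$, and the yellow face is stopped by the red edges via $U_1$, $U_2$. Your appeal to Lemma~\ref{lem:into} for ``the two belt edges shared with neighbouring decahedra'' gestures at this, but those inter-decahedron contacts carry essentially the whole content of the lemma; they are not one uniform case among five.

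Separately, even granting a sign $\varepsilon_i$ for each edge, your final step is not a formality: five signed normals chosen from antipodal pairs spaced $36^\circ$ apart \emph{can} all lie in a closed half-plane, so the union of the five half-planes can leave an uncovered sector of tangent directions. You defer this to a ``finite inspection'' without performing it, and since the sign pattern you would feed into that inspection is derived from the faulty blocking claim above, the verification cannot currently be carried out as stated.
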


\begin{proof}[Proof]
Consider the face of the upper belt of the lower decahedron (red face) in Fig. \ref{slide_decahedron}.
 \begin{figure}
\centering\includegraphics[width=200pt]{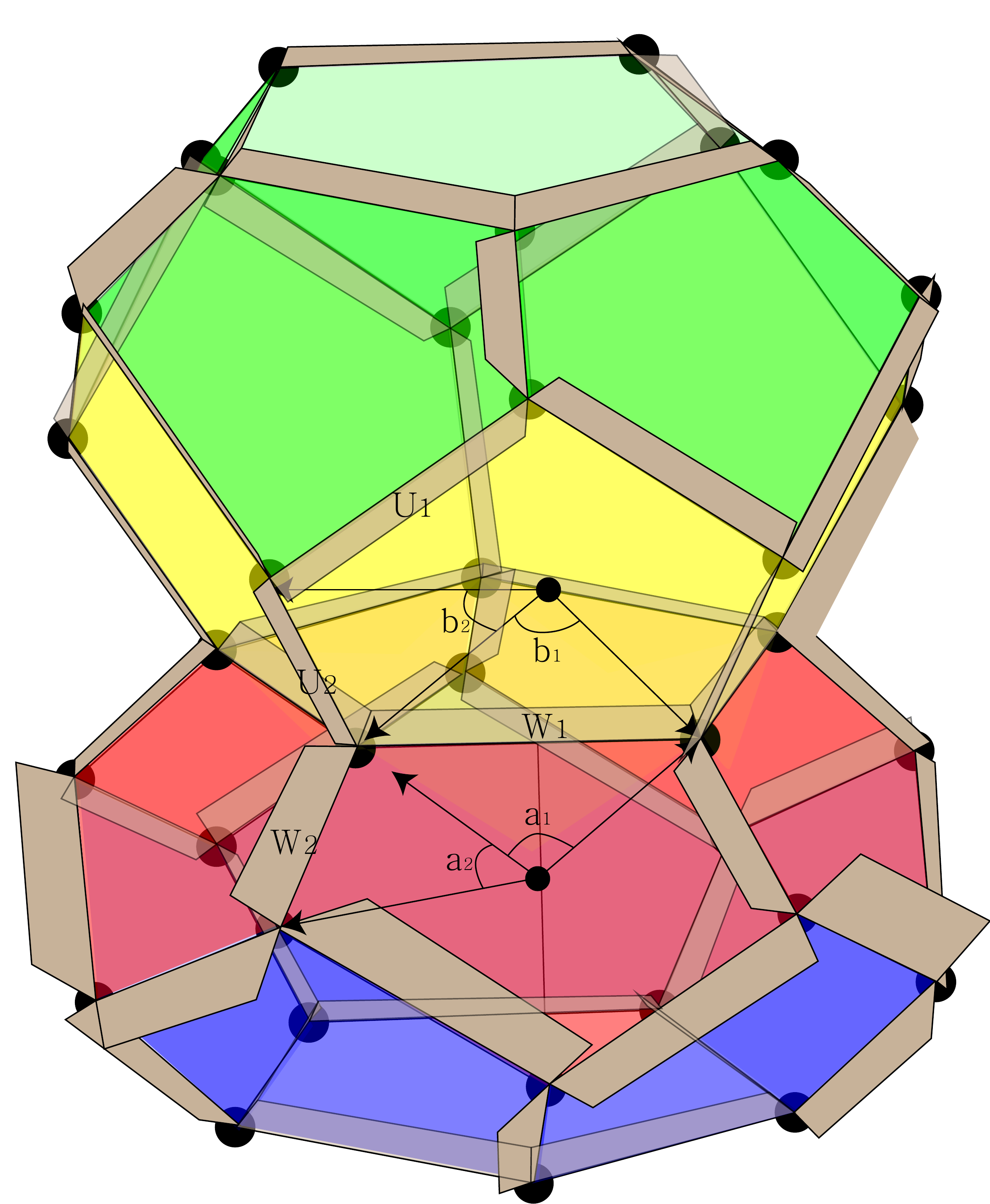}
\caption{Slides of faces in the direction of the vector on the face with the starting point $O_{1}$ and $O_{2}$}
\label{slide_decahedron}
\end{figure}
Let $\overrightarrow{v}_{1}$, $\overrightarrow{v}_{2}$ and $\overrightarrow{v}_{3}$ be vectors on the face with the same starting point $O_{1}$. Let $a_{1},a_{2},a_{3}$ be angles between $\overrightarrow{v}_{1}$ and $\overrightarrow{v}_{2}$, $\overrightarrow{v}_{2}$ and $\overrightarrow{v}_{3}$, and $\overrightarrow{v}_{1}$ and $\overrightarrow{v}_{3}$, respectively.
When the face of the upper belt of the lower decahedron moves in the direction of a vector on the face placed in the angle $a_{1}$ with the starting point $O_{1}$, it is blocked by the faces of the lower belt of the upper decahedron (yellow faces) because of the wing $W_ {2}$. When it moves in the direction of the vector placed in the angle $a_{2}$ with the starting point $O_{1}$, it is blocked by the faces of the lower belt of the upper decahedron because of the wing $W_ {1}$. When the red face moves in the direction of a vector on the face placed in the angle $a_{3}$ with the starting point $O_{1}$, it is easy to see that it is blocked.

Now, consider the face of the lower belt of the upper decahedron (yellow face) in Fig. \ref{slide_decahedron}. 
Let $\overrightarrow{u}_{1}$, $\overrightarrow{u}_{2}$ and $\overrightarrow{u}_{3}$ be vectors on the face with the same starting point $O_{2}$. Let $b_{1},b_{2},b_{3}$ be angles between $\overrightarrow{u}_{1}$ and $\overrightarrow{u}_{2}$, $\overrightarrow{u}_{2}$ and $\overrightarrow{u}_{3}$, and $\overrightarrow{u}_{1}$ and $\overrightarrow{u}_{3}$, respectively. When the yellow face moves in the direction of the vector on the face placed in the angle $b_{1}$ with the starting point $O_{2}$, it is blocked by the red edges because of the wing $U_{2}$. When the yellow face moves in the direction of the vector on the face placed in the angle $b_{2}$ with the starting point $O_{2}$ it is blocked by the wing $U_{1}$. When the yellow face moves in the direction of a vector on the face placed in the angle $b_{3}$ with the starting point $O_{2}$ it is easy to see that it is blocked.

\end{proof}

\begin{lm}\label{lem:rotation}
Each face with wings of a decahedron of the chain $\mathcal{C}_{\infty}$ cannot rotate if other faces are fixed.
 \end{lm}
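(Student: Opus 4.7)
The plan is to reduce an arbitrary infinitesimal rotation of a face $\Pi$ to the translational motions already excluded by Lemmas \ref{lem:out}, \ref{lem:into}, and \ref{lem:slide}, by tracking the first-order displacement of individual wing tips. An infinitesimal rigid rotation is specified by an angular velocity $\overrightarrow{\omega}\neq 0$ about some point $O\in\Pi$, and the first-order velocity at a point $P\in\Pi$ is $\overrightarrow{\omega}\times\overrightarrow{OP}$. Split $\overrightarrow{\omega}=\overrightarrow{\omega}_{\parallel}+\overrightarrow{\omega}_{\perp}$ with $\overrightarrow{\omega}_{\parallel}$ in the plane of $\Pi$ and $\overrightarrow{\omega}_{\perp}$ normal to it.

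Suppose first $\overrightarrow{\omega}_{\parallel}\neq 0$, i.e.\ the motion tilts $\Pi$ out of its plane. The three wings of $\Pi$ visible in Figs.~\ref{faces} and \ref{wing_decahedron} are attached along three different edges of the underlying pentagon, so their tips are not collinear in the plane of $\Pi$. Hence I can pick a wing tip $T$ whose position vector $\overrightarrow{OT}$ is not parallel to $\overrightarrow{\omega}_{\parallel}$; then $\overrightarrow{\omega}_{\parallel}\times\overrightarrow{OT}$ has a nonzero component along the normal to $\Pi$, pointing either outward or inward. An outward component at $T$ is stopped by the adjacent wings of $\Pi$ itself, by the argument of Lemma \ref{lem:out} (Fig.~\ref{faces_R3}); an inward component at $T$ is stopped by the faces and wing of the neighboring decahedra, by the argument of Lemma \ref{lem:into} (Fig.~\ref{into_face_R3}). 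In both cases the obstructing neighbor meets $\Pi$ along a wing edge incident to $T$, so the full normal half-space at $T$ is forbidden and this tilt is inadmissible.

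If instead $\overrightarrow{\omega}_{\parallel}=0$ and $\overrightarrow{\omega}_{\perp}\neq 0$, the motion is a pure in-plane spin, and $\overrightarrow{\omega}_{\perp}\times\overrightarrow{OT}$ is a nonzero vector on the face (in the sense of the paragraph before Lemma \ref{lem:out}) for any wing tip $T$ off the axis. Lemma \ref{lem:slide} then applies at $T$: the direction of sliding of $T$ falls into one of the angular sectors $a_i$ (respectively $b_i$) of Fig.~\ref{slide_decahedron} and is blocked there by the wings $W_1,W_2$ (respectively $U_1,U_2$) of an adjacent decahedron. Any $\overrightarrow{\omega}\neq 0$ has at least one of the two components nonzero, so no nontrivial infinitesimal rotation of $\Pi$ is admissible.

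The main obstacle will be to make rigorous the claim that the obstructions produced by Lemmas \ref{lem:out}, \ref{lem:into}, and \ref{lem:slide} are \emph{local} at the chosen wing tip $T$, rather than merely global consequences of rigid translation of the whole face: for the rotation argument one needs that the blocking neighbor cuts off a full half-space of first-order directions in a neighborhood of $T$. This is the case because each obstruction pictured in Figs.~\ref{faces_R3}, \ref{into_face_R3}, \ref{slide_decahedron} is realized by contact along a wing edge passing through $T$, and this can be verified case by case from the explicit coordinates of $\mathcal{C}_\infty$ in Section \ref{Structure} and the wing construction in Fig.~\ref{faces}. Once this locality check is performed, the two cases above exhaust every nonzero $\overrightarrow{\omega}$ and the lemma follows.
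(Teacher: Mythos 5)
Your reduction of rotations to the translation lemmas is a genuinely different route from the paper's, but as it stands it has a real gap, and you have correctly located it yourself: the step you defer to a ``case by case'' check is the entire content of the lemma. Lemmas \ref{lem:out}, \ref{lem:into} and \ref{lem:slide} assert that the \emph{whole} face cannot translate in a given direction; each such assertion is witnessed by a contact between specific wings or faces at specific locations. For an infinitesimal rotation the non-penetration condition must be violated \emph{at one of those contact points} $P$, i.e.\ one needs $\overrightarrow{n}_{P}\cdot\left(\overrightarrow{\omega}\times\overrightarrow{OP}\right)<0$ for the contact normal $\overrightarrow{n}_{P}$ there; it is not enough that the velocity at some other point $T$ of the face points in a direction forbidden for translations. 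In the space of infinitesimal rigid motions the admissible motions form a cone, and excluding all pure translations does not exclude the rotational rays of that cone. So until you verify that the obstructions of Figs.~\ref{faces_R3}, \ref{into_face_R3}, \ref{slide_decahedron} are realized by contacts at (or on the correct side of) your chosen wing tips, the two cases $\overrightarrow{\omega}_{\parallel}\neq 0$ and $\overrightarrow{\omega}_{\perp}\neq 0$ establish nothing. A smaller issue: by writing every rotation as $\overrightarrow{\omega}\times\overrightarrow{OP}$ with $O\in\Pi$ you only treat axes passing through the face; other axes contribute an additional translational term and again require the cone argument rather than a pointwise appeal to the translation lemmas.

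The paper avoids the reduction altogether and argues directly: it declares rotation about the normal to be clearly blocked, and then checks rotation about each of the five sides $l_{1},\dots,l_{5}$ (resp.\ $r_{1},\dots,r_{5}$) of a lower-belt (resp.\ upper-belt) face, in both the outward and inward sense, naming in Figs.~\ref{rotate_under_faces_R3} and \ref{rotate_over_faces_R3} the wings $W_{2},\dots,W_{5}$ (resp.\ $U_{1},U_{2},U_{4},U_{5}$) and the faces $A$, $B$, $C$ that obstruct each case. That is exactly the pointwise contact information your outline is missing; if you carry out your locality check you will in effect reproduce this case analysis, though your decomposition $\overrightarrow{\omega}=\overrightarrow{\omega}_{\parallel}+\overrightarrow{\omega}_{\perp}$ does organize arbitrary axes more systematically than the paper's restriction to rotations about the sides $l_{i}$, $r_{i}$.
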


\begin{proof}[Proof]

It is clear that each face of the chain $\mathcal{C}_{\infty}$ of decahedra cannot rotate around a vector perpendicular to it.

Now let us show that the face of the chain $\mathcal{C}_{\infty}$ of decahedra cannot rotate around its sides. Consider a face of the lower belt (a blue face) of one decahedron in Fig.~\ref{rotate_under_faces_R3}, say, the $n$-th decahedron.

 \begin{figure}
\centering\includegraphics[width=250pt]{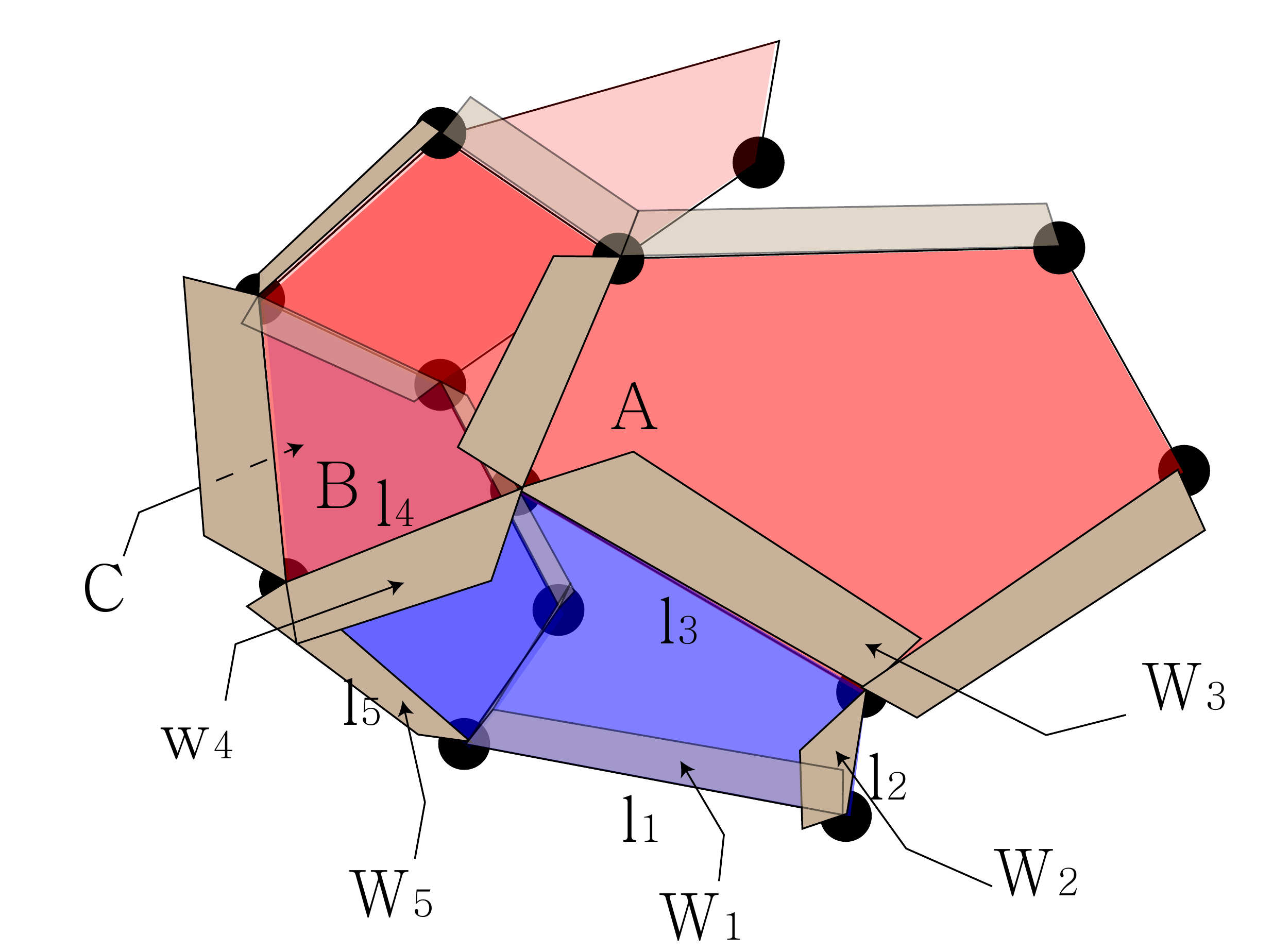}
\caption{Rotation of faces of the lower belt}
\label{rotate_under_faces_R3}
\end{figure}

When rotating the blue face around the side $l_ {1}$ outward, it is blocked by the wings $W_ {2}$ and $W_{4}$, and, when rotating around $l_{1}$ inward, it is blocked by the face $A$ because of the wing $W_ {3}$. When rotating the blue face around $l_{2}$ outward, it is blocked by the wing $W_{4}$, and when rotating around $l_{1}$ inside it is blocked by the face $A$ because of the wings $W_{3}$ and $W_{3}$ and by the face $C$ because of $W_{5}$. Similarly, it can be shown that the side edge of the lower chord is blocked when rotating around $l_{3}, l_{4}$ and $ l_{5} $.

Consider a face of the upper belt (a red face) of one decahedron in Fig.~\ref{rotate_over_faces_R3}.
 \begin{figure}
\centering\includegraphics[width=250pt]{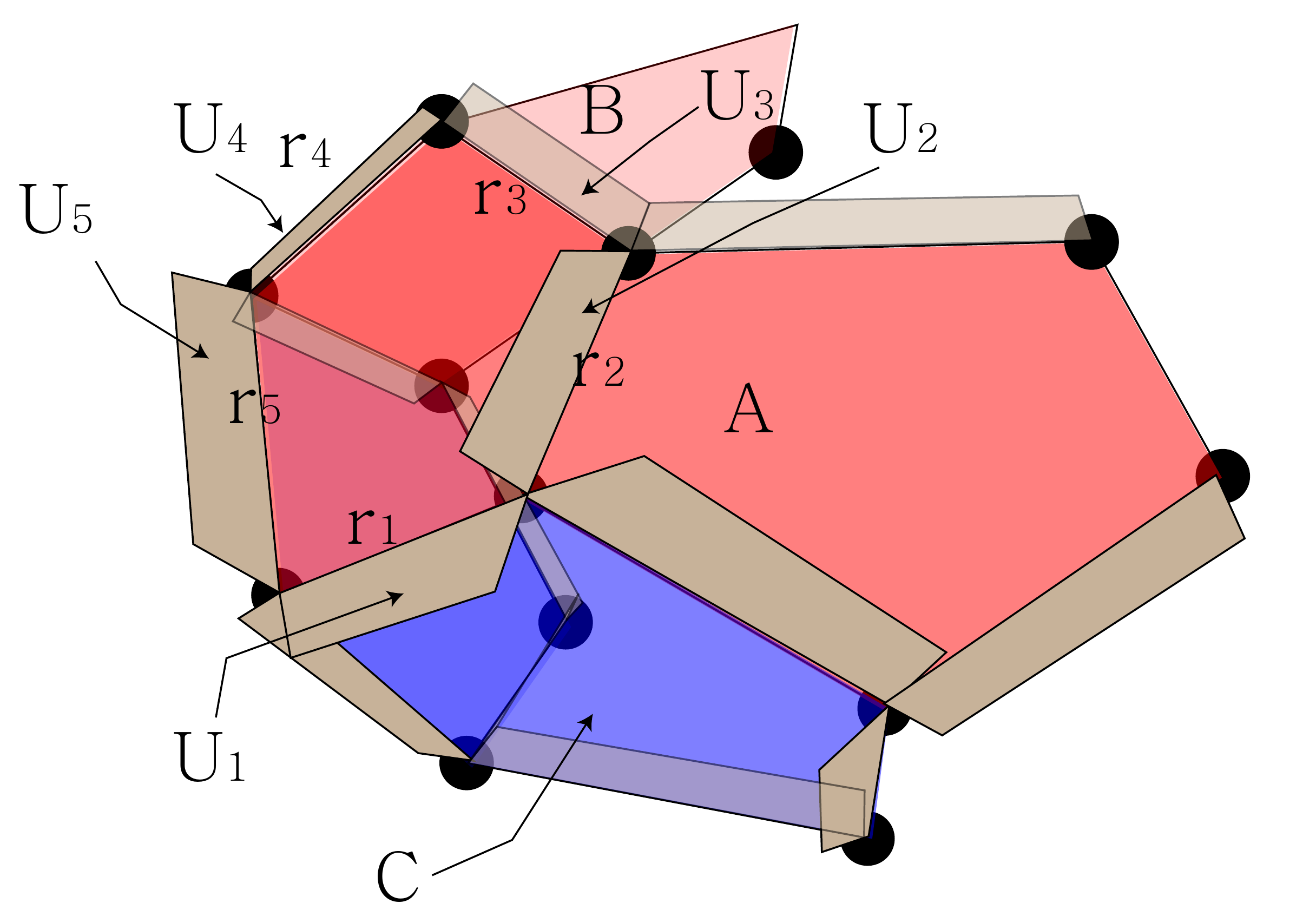}
\caption{Rotation of faces of the upper belt}
\label{rotate_over_faces_R3}
\end{figure}
When the red face rotates outward around $r_{1}$, it is blocked by the wings $U_{2}$ and $U_{5} $, and, when it is rotated around $r_{1}$ inward, it is blocked by the face $B$ due to for the $ U_ {4} $ wings. When the red face rotates outward around $r_{2}$, it is blocked by the wings $U_ {5}$ and when it is rotated around $r_{1}$ inward, then it is blocked by $ B $ because of $U_{4}$ and $C$ because of $U_{1}$. Similarly, it can be shown that the side edge of the lower chord is blocked when rotating around $ r_ {3}, r_ {4} $ and $ r_ {5} $.
\end{proof}

The lemmas \ref{lem:out}, \ref{lem:into}, \ref{lem:slide} and \ref{lem:rotation} imply the following theorem:

\begin{thm}\label{thm:main}
The faces of the chain $ \mathcal{C}_{\infty}$ of infinite decahedra cannot move.
\end{thm}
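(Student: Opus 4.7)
The plan is to reduce the theorem to a case analysis on infinitesimal rigid motions of a single face with wings, using the four preceding lemmas as the building blocks. A rigid motion of such a face in $\mathbb{R}^3$ has six degrees of freedom, and decomposes as a translation $T \in \mathbb{R}^3$ plus an angular velocity $\Omega \in \mathbb{R}^3$ (rotation about the center of the face). The translation $T$ splits into a component normal to the face (pointing either outward or inward in the sense defined after Fig.~\ref{faces_R3}) and a component lying on the face. Similarly, the rotation $\Omega$ decomposes into a rotation about the normal to the face and a rotation about an axis in the face. So there are four fundamental types of infinitesimal motions: outward translation, inward translation, tangential (on-the-face) translation, and rotation; each type is ruled out by exactly one of Lemmas~\ref{lem:out}, \ref{lem:into}, \ref{lem:slide}, \ref{lem:rotation}.

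First I would argue that pure translations are impossible. Fix a face $\Pi$ of the chain $\mathcal{C}_\infty$ and suppose $T \neq 0$. Write $T = T_\perp + T_\parallel$, with $T_\perp$ along the outward normal $\vec{v}$ (or $\vec{u}$) and $T_\parallel$ lying in the plane of $\Pi$. If $T_\perp \cdot \vec{v} > 0$ (or $T_\perp \cdot \vec{u} > 0$), Lemma~\ref{lem:out} produces the blocking wings (labelled $A,B,C,D$ in Fig.~\ref{faces_R3}) whose obstruction persists under a small tangential perturbation by $T_\parallel$. If $T_\perp$ points inward, Lemma~\ref{lem:into} supplies the blockers (the faces $A,C$ and the wing $B$ from the adjacent decahedron, cf.\ Fig.~\ref{into_face_R3}). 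Finally, if $T_\perp = 0$ and only $T_\parallel \neq 0$, then $T_\parallel$ lies in one of the angular sectors $a_1,a_2,a_3$ (or $b_1,b_2,b_3$) of Fig.~\ref{slide_decahedron}, and Lemma~\ref{lem:slide} provides a blocking wing in each sector.

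Next I would handle rotations. A rotation about the outward normal to $\Pi$ is immediately blocked because the wings attached to $\Pi$ force it to interpenetrate an adjacent face, as noted at the start of the proof of Lemma~\ref{lem:rotation}. A rotation about an axis in the plane of $\Pi$ can be written as a linear combination of rotations about the five edges $l_i$ (for lower-belt faces) or $r_i$ (for upper-belt faces); Lemma~\ref{lem:rotation} verifies the blocking for each such elementary rotation, in both the outward and inward senses, using the wings $W_i$ resp.\ $U_i$ shown in Figs.~\ref{rotate_under_faces_R3} and \ref{rotate_over_faces_R3}.

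The main obstacle I anticipate is the genuinely combined motion $(T,\Omega)$ in which both $T$ and $\Omega$ are nonzero and the individual blockers of the four lemmas could, in principle, be evaded by some clever cancellation along the face. To handle this I would localize the argument: for a general infinitesimal motion, the instantaneous velocity field on $\Pi$ is $\vec{v}(p) = T + \Omega \times (p - O)$, and one selects, for the given $(T,\Omega)$, the specific vertex or wing‑tip $p$ of $\Pi$ at which some coordinate of $\vec{v}(p)$ forces penetration into the corresponding blocker identified by one of the four lemmas. Since the blockers in Figs.~\ref{faces_R3}--\ref{rotate_over_faces_R3} come in geometrically opposite pairs (inward/outward, left/right about each edge), any nontrivial $(T,\Omega)$ yields a nonzero component in at least one of the eight concrete directions already ruled out, which completes the proof.
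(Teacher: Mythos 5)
Your overall reduction is the same as the paper's: the paper's entire proof of Theorem~\ref{thm:main} is the single assertion that Lemmas~\ref{lem:out}, \ref{lem:into}, \ref{lem:slide} and \ref{lem:rotation} imply it, i.e.\ exactly your four-way split into outward translation, inward translation, tangential translation, and rotation. To your credit, you explicitly flag the point the paper passes over in silence: a general rigid motion has both $T\neq 0$ and $\Omega\neq 0$, and blocking each pure type separately does not formally block their combinations.

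However, your proposed fix of that gap does not work, so the gap remains. Two problems. First, blockedness is not preserved under linear combination: the first-order admissible motions form a convex polyhedral cone in the six-dimensional space of pairs $(T,\Omega)$ (one linear inequality per contact), and the complement of a convex cone is not closed under addition; so decomposing a motion into pure pieces that are individually blocked, or writing an in-plane rotation as ``a linear combination of rotations about the five edges $l_i$,'' proves nothing about the combined motion. Second, your choice of ``the vertex or wing-tip $p$ at which some coordinate of $\vec{v}(p)$ forces penetration'' conflates two different things: each lemma blocks its pure motion via a \emph{specific} contact (a particular wing $W_i$, $U_i$, or neighbouring face), and for a combined motion the velocity \emph{at that contact} is the combined one, which can point into the allowed half-space at every one of those contacts simultaneously. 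The standard counterexample is a rotation about an axis far from the face: it has $T\neq 0$ and $\Omega\neq 0$, is not a positive multiple of any of your eight pure directions, and is not addressed by any of the four lemmas. A correct completion would list the finitely many contact constraints $n_i\cdot\bigl(T+\Omega\times(p_i-O)\bigr)\ge 0$ furnished by the wings and adjacent faces and check that their normals positively span $\mathbb{R}^6$, so that the admissible cone is $\{0\}$; that also supplies the missing passage from infinitesimal motions to the finite motions (``carried to infinity'') in the theorem's intended meaning. The paper itself omits all of this, so your attempt is no weaker than the published argument, but as written it is not yet a proof.
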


\section{Necklace of decahedra. Self-interlocking structure}             \label{Structure1}

{\em The necklace $\mathcal{M}_{2n}$ of $2n$ decahedra} can be constructed in the following way: Consider pentagons on the plane $Oxy$ with the following vertices $P_1, P_2, P_3, P_4$:

$$P_{1} = \left\{\left(2\cos\left(\frac{2s\pi}{5}\right),2\sin\left(\frac{2s\pi}{5}\right), 0\right)\right\}_{s=0}^4,$$

$$P_{2} = \left\{\left(3\cos\left(\frac{2s\pi}{5}\right),3\sin\left(\frac{2s\pi}{5}\right), 0\right)\right\}_{s=0}^4,$$

 $$P_{3} = \left\{\left(3\cos\left(\frac{(2s+1)\pi}{5}\right),3\sin\left(\frac{(2s+1)\pi}{5}\right), 0\right)\right\}_{s=0}^4$$

  $$\mbox{and}\quad P_{4} = \left\{\left(2\cos\left(\frac{(2s+1)\pi}{5}\right),2\sin\left(\frac{(2s+1)\pi}{5}\right), 0\right)\right\}_{s=0}^4.$$

   Note that the vertices of the $k$-th decahedron are obtained by the parallel translation of the pentagons $P_{1} $, $P_{2}$, $P_{3} $ and $P_{4} $ relative to the vector $(0,0,1)$ on planes $ z = 3 (k-1) $, $ z = 3 (k-1) + 1 $, $ z = 3 (k-1) + 2 $ and $ z = 3 (k-1) + 3$, respectively.

As above, the $ k $-th decahedron of the necklace $\mathcal{M}_{2n} $ is constructed by rotating the pentagons $P_{1}$, $P_{2}$, $P_{3}$ and $P_{4}$ to $Oxy$ on angles $ (3 (k-1) + l) \cdot \frac{2 \pi}{3n} $, $ l = 0,1,2,3 $, respectively. In other words, first, we transfer the pentagons $P_{1}$, $ P_{2} $, $ P_{3} $ and $P_{4}$ parallelly along the $Oz$ axis on the $Oxy$ plane and denote them by $P'_{1,0} $, $ P'_{2,0} $, $ P'_{3,0} $ and $ P'_{4,0} $. Rotate the plane $ Oxy $ by an angle $ \theta $ and denote the image by $ O_{\theta} $. The image of the vertices $ P'_{i, 0} $ when the plane $ Oxy$ is rotated through the angle $ \theta $ is denoted by $ P'_{i, \theta} $, $ i = 1,2 $.

The obtained vertices
$$P'_{1,0}, P'_{2,\frac{2\pi}{3n}}, P'_{3, \frac{4\pi}{3n}}, P'_{4, \frac{6\pi}{3n}}$$ 
become the vertices of the first decahedron, see. Fig.~\ref{cont_1_decahedron}. Vertices of the second decahedron are
$$P'_{4, \frac{6\pi}{3n}}, P'_{3, \frac{8\pi}{3n}}, P'_{2, \frac{10\pi}{3n}}, P'_{1, \frac{12\pi}{3n}},$$
and
$$\quad P'_{1, \frac{12\pi}{3n}}, P'_{2, \frac{14\pi}{3n}}, P'_{3, \frac{16\pi}{3n}},P'_{4, \frac{18\pi}{3n}}$$ are vertices of $3$rd decahedron, and this process can be repeated as many as one wants.

 \begin{figure}
\centering\includegraphics[width=200pt]{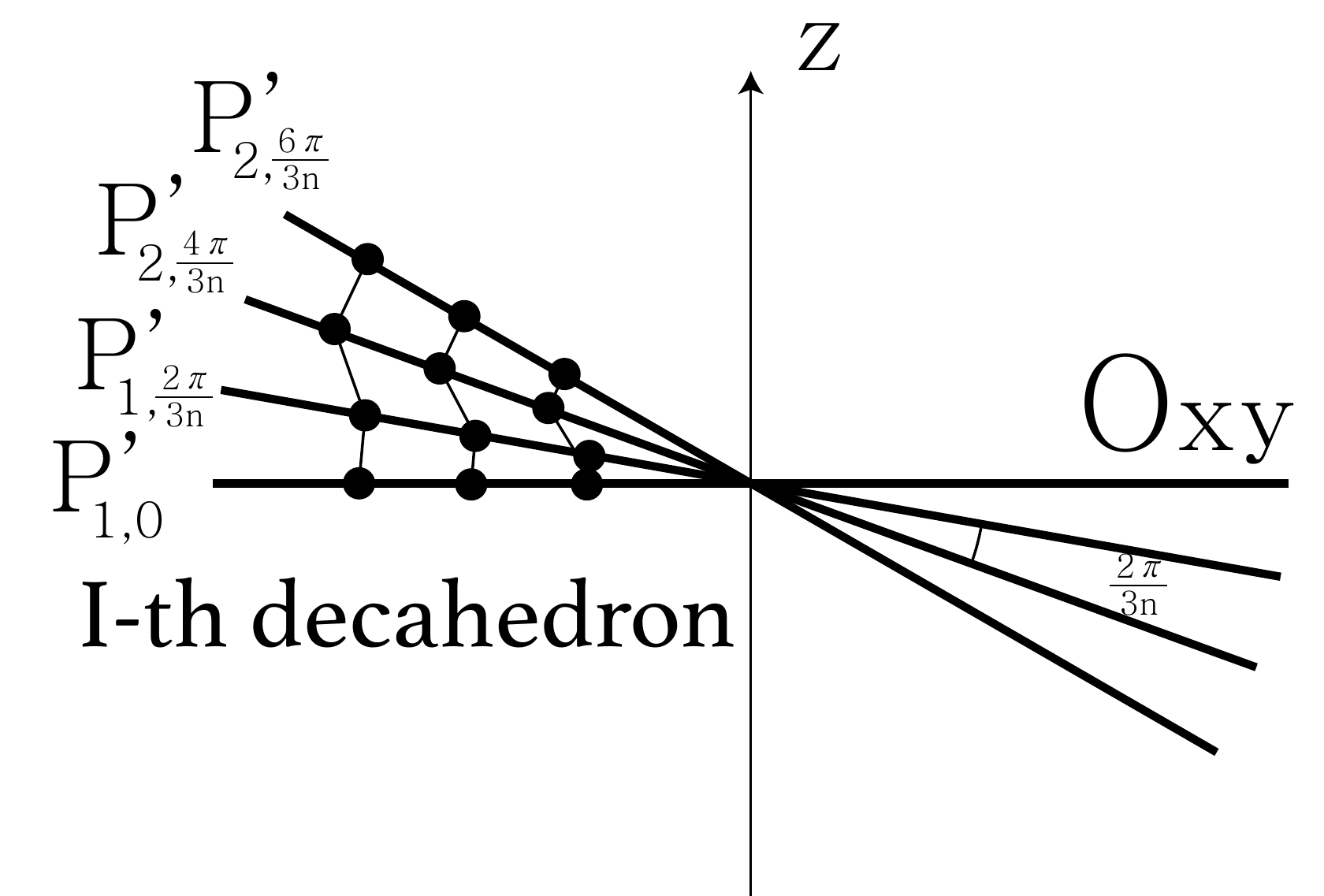}
\caption{First decahedron of the neckless of $n$ decahedra}
\label{cont_1_decahedron}
\end{figure}



The necklace $\mathcal{M}_{2n}$ of $2n$ decahedra consists of $2n$ decahedra ($20n$ faces) embedded in the space $\mathbb{R}^{3}$ such that $k$-th decahedron is placed with the following coordinates: \\


\begin{eqnarray*}
\begin{pmatrix}
1&0 & 0 \\
0 & \cos\left((3(k-1))\frac{2\pi}{3n}\right) & -\sin\left((3(k-1))\frac{2\pi}{3n}\right) \\
0& \sin\left((3(k-1))\frac{2\pi}{3n}\right) & \cos\left((3(k-1))\frac{2\pi}{3n}\right)
\end{pmatrix}\times \\ {}\\
\times \left(2\cos\left(\frac{(r(k)+2s)\pi}{5}\right),2\sin\left(\frac{(r(k)+2s)\pi}{5}\right), 0\right),
\end{eqnarray*}

\begin{eqnarray*}
\begin{pmatrix}
1&0 & 0 \\
0 & \cos\left((3(k-1)+1)\frac{2\pi}{3n}\right) & -\sin\left((3(k-1)+1)\frac{2\pi}{3n}\right) \\
0& \sin\left((3(k-1)+1)\frac{2\pi}{3n}\right) & \cos\left((3(k-1)+1)\frac{2\pi}{3n}\right)
\end{pmatrix}\times \\ 
\times \left(3\cos\left(\frac{(r(k)+2s)\pi}{5} \right), 3\sin\left(\frac{(r(k)+2s)\pi}{5} \right), 0 \right),\\
\begin{pmatrix}
1&0 & 0 \\
0 & \cos\left((3(k-1)+2)\frac{2\pi}{3n}\right) & -\sin\left((3(k-1)+2)\frac{2\pi}{3n}\right) \\
0& \sin\left((3(k-1)+2)\frac{2\pi}{3n}\right) & \cos\left((3(k-1)+2)\frac{2\pi}{3n}\right)
\end{pmatrix}\times \\ 
\times \left(3\cos\left(\frac{(r(k)+2s+1)\pi}{5}\right),3\sin\left(\frac{(r(k)+2s+1)\pi}{5}\right), 0\right), \quad\mbox{and}
\end{eqnarray*}

\begin{eqnarray*}
\begin{pmatrix}
1&0 & 0 \\
0 & \cos\left((3(k-1)+3)\frac{2\pi}{3n}\right) & -\sin\left((3(k-1)+3)\frac{2\pi}{3n}\right) \\
0& \sin\left((3(k-1)+3)\frac{2\pi}{3n}\right) & \cos\left((3(k-1)+3)\frac{2\pi}{3n}\right)
\end{pmatrix}\times \\ {}\\
\times \left(2\cos\left(\frac{(r(k)+2s+1)\pi}{5}\right),2\sin\left(\frac{(r(k)+2s+1)\pi}{5}\right), 0\right),
\end{eqnarray*}
where $s=0,\dots,4$, a $r(k)$ is equal to $k$ modulo $2$.
Analogously to Theorem~\ref{thm:main} the following theorem can be proved.
\begin{thm}
Faces of the necklace $\mathcal{M}_{2n}$ of $2n$ decahedra cannot be moved.
\end{thm}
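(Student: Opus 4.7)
The plan is to follow the same four-step recipe used to establish Theorem~\ref{thm:main}: prove analogs of Lemmas~\ref{lem:out}, \ref{lem:into}, \ref{lem:slide}, and \ref{lem:rotation} in the necklace setting, and then combine them to rule out any infinitesimal motion of any single face.

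First I would fix a $k$ and pass to a rotated frame $R_k^{-1}$ that puts the $k$-th decahedron of $\mathcal{M}_{2n}$ into the standard position. The parity $r(k) = k \bmod 2$ exactly matches the parity condition that distinguishes odd-indexed and even-indexed decahedra of the chain $\mathcal{C}_{\infty}$ in Section~\ref{Structure}, so in this frame the $k$-th decahedron is identical to a chain decahedron. Its two neighbors, the $(k-1)$-th and $(k+1)$-th, then occupy positions obtained from the corresponding chain positions by additional rigid rotations of angles $\pm\frac{2\pi}{n}$ about the $Ox$ axis.

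Next I would rerun the four blocking lemmas in this local frame. Lemma~\ref{lem:out} is immediate, since it only invokes wings inside the $k$-th decahedron, which is unchanged. Lemmas~\ref{lem:into}, \ref{lem:slide}, and \ref{lem:rotation} each use, in addition, certain wings or faces of the two neighbouring decahedra; for each such use I would check that after the rotation by $\pm\frac{2\pi}{n}$ the relevant wing still projects onto, and covers with positive margin, the face it is supposed to block. Once this is verified, the combinatorial blocking patterns from those three lemmas transfer verbatim. Closure of the necklace is automatic: after $2n$ decahedra the cumulative angle is $3\cdot 2n\cdot\frac{2\pi}{3n} = 4\pi \equiv 0 \pmod{2\pi}$ and $r(2n+1) = r(1)$, so every decahedron in the necklace has two genuine neighbours and the per-decahedron argument applies uniformly.

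The main obstacle I expect is the positive-margin check for the wing overlaps after the rotation by $\frac{2\pi}{n}$. For large $n$ this is a small perturbation of the chain configuration and the overlaps survive by continuity; for small $n$, where the necklace is tightly curled, one must verify explicitly from the matrix formulas at the end of Section~\ref{Structure1} both that successive decahedra do not collide and that the wings still intersect the blocking faces nontrivially. This quantitative estimate is the only nonroutine part of the argument; once it is in hand, combining the four analogs rules out translations, sliding, and rotations of every face of $\mathcal{M}_{2n}$, which proves the theorem.
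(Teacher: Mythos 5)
Your proposal follows exactly the route the paper intends: the paper gives no separate argument for this theorem, stating only that it ``can be proved analogously to Theorem~\ref{thm:main}'', i.e.\ by rerunning Lemmas~\ref{lem:out}, \ref{lem:into}, \ref{lem:slide} and \ref{lem:rotation} for the necklace, which is precisely your plan of passing to a rotated local frame and transferring the four blocking lemmas. The one point you flag as nonroutine --- verifying that the wing overlaps survive the extra rotation by $\frac{2\pi}{n}$ with positive margin, particularly for small $n$ --- is exactly the step the paper leaves implicit, so your proposal is, if anything, more complete than the paper's own treatment.
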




\section{Questions and stories for further research}     \label{Openproblems}
The phenomenon of {\em self-interlocking} is important not only by itself, but also as a direction to the engineering applications of combinatorial geometry, which can be quite varied and unexpected. The fact that self-interlocking structures were invented only at the turn of the century says, on the one hand, of a lack of understanding of three-dimensional space, and on the other hand, of the possibilities that this (future) understanding provides. This raises a number of questions. Some of them are related to engineering or architectural applications, and some are related to the development of intuition and understanding of the general situation.



Let us start with the following questions. 
\begin{enumerate}
\item How to realise various well-known structures (Walls corresponding to action of Coxeter groups, Voronoi tilings, ...) by using self-interlocking structure, that is, how to build the walls correctly, so as not to glue them together, but to jam?

\item How to construct {\em flexible} self-interlocking structures, i.e., self-interlocking structures which admit infinitesimal moves but the whole space of positions (with some faces fixed) is bounded.

\item  How to construct self-interlocking structure in dimension 4,\\

(a) what is known about 4-dimension?\\
(b) how to construct 3 dimensional self-interlocking structure\\
(c) how to construct 2 dimensional self-interlocking structure\\

\item  Are there any connections here with the packages of something?
Probably 3-dimensional space has it. (you can try to pack everything very tightly),there is probably no lower bound: two-dimensional space gives volume 0.

\item Is it possible to arrange a self-interlocking structure in a torus or in a cylinder that would "hold on by itself" - without holding two polygons?
There are no homotopies in a specific torus with fixed lengths.

\end{enumerate}

\section{Acknowledgement}
The authors are extremely grateful to Fedoseev, who paid attention to the problem of self-interlocking and repeatedly discussed intermediate results before their publication.\\
Ideas of constructions in this paper are contributed by the first author and third author found exact coordinates for realisation of given structure in three dimensional space.\\
Alexei Kanel-Belov was supported by Russian Science Foundation  grant No. 17-11-01377.\\
The work of V.O.Manturov was funded by the development program of the Regional Scientific and Educational Mathematical Center of the Volga Federal District, agreement N 075-02-2020.

\renewcommand{\refname}{References}

\pagebreak

\section{Coordinates of new model with decahedra}

\begin{eqnarray*}
\begin{pmatrix}
1&0 & 0 \\
0 & \cos\left((3(k-1))\frac{2\pi}{3n}\right) & -\sin\left((3(k-1))\frac{2\pi}{3n}\right) \\
0& \sin\left((3(k-1))\frac{2\pi}{3n}\right) & \cos\left((3(k-1))\frac{2\pi}{3n}\right)
\end{pmatrix}\times \\
\times \left(2\cos\left(\frac{(2s)\pi}{5}\right),2\sin\left(\frac{(2s)\pi}{5} \right), 0 \right),
\end{eqnarray*}

\begin{eqnarray*}
\begin{pmatrix}
1&0 & 0 \\
0 & \cos\left((3(k-1)+1)\frac{2\pi}{3n}\right) & -\sin\left((3(k-1)+1)\frac{2\pi}{3n}\right) \\
0& \sin\left((3(k-1)+1)\frac{2\pi}{3n}\right) & \cos\left((3(k-1)+1)\frac{2\pi}{3n}\right)
\end{pmatrix}\times \\
\times \left(3\cos\left(\frac{(2s)\pi}{5}\right),3\sin\left(\frac{(2s)\pi}{5}\right), 0\right),
\end{eqnarray*}

\begin{eqnarray*}
\begin{pmatrix}
1&0 & 0 \\
0 & \cos\left((3(k-1)+2)\frac{2\pi}{3n}\right) & -\sin\left((3(k-1)+2)\frac{2\pi}{3n}\right) \\
0& \sin\left((3(k-1)+2)\frac{2\pi}{3n}\right) & \cos\left((3(k-1)+2)\frac{2\pi}{3n}\right)
\end{pmatrix}\times \\
\times \left( 3\cos\left(\frac{(2s+1)\pi}{5}\right),3\sin\left(\frac{(2s+1)\pi}{5}\right), 0\right),
\end{eqnarray*}
and
\begin{eqnarray*}
\begin{pmatrix}
1&0 & 0 \\
0 & \cos\left((3(k-1)+3)\frac{2\pi}{3n}\right) & -\sin\left((3(k-1)+3)\frac{2\pi}{3n}\right) \\
0& \sin\left((3(k-1)+3)\frac{2\pi}{3n}\right) & \cos\left((3(k-1)+3)\frac{2\pi}{3n}\right)
\end{pmatrix}\times \\
\times \left(2\cos\left(\frac{(2s+1)\pi}{5}\right),2\sin\left(\frac{(2s+1)\pi}{5}\right), 0\right),
\end{eqnarray*}

where $s=0,\dots, 4$

 \begin{figure}
\centering\includegraphics[width=300pt]{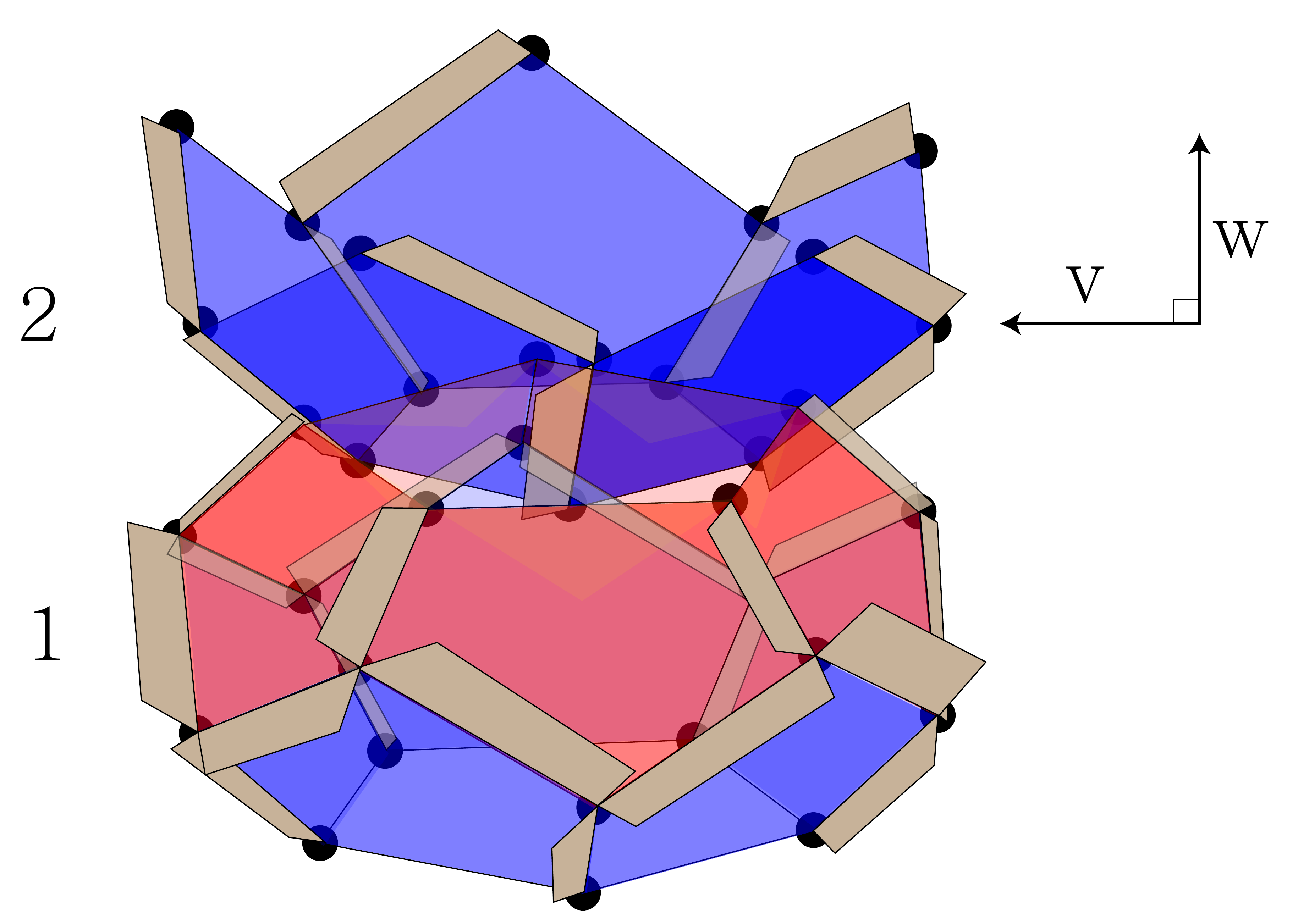}
\caption{New model and two decahedra of neckless of $n$ decahedra}
\label{new_idea_wing_decahedronpdf}
\end{figure}

\end{document}